\def\tAn{\tilde{A}_n}
\def\mQ{\mathcal{Q}}
\def\mH{\mathcal{H}}
\def\mC{\mathcal{C}}
\def\Hom{\mathrm{Hom}}
\def\add{\mathrm{add}}
\def\End{\mathrm{End}}
\def\Z{\mathbb{Z}}
\def\rn#1{\romannumeral#1}
\def\N{\mathbb{N} }
\theoremstyle{plain}
\newtheorem{thm}{Theorem}[section]
\newtheorem{prop}[thm]{Proposition}
\newtheorem{lem}[thm]{Lemma}
\newtheorem{cor}[thm]{Corollary}
\theoremstyle{definition}
\newtheorem{dfn}[thm]{Definition}
\newtheorem{eg}[thm]{Example}
\newtheorem{nt}[thm]{Notation}
\newtheorem{rmk}[thm]{Remark}
\newtheorem{alg}[thm]{Algorithm}
\title{Mutation classes of $\tilde{A}_n-$quivers and derived equivalence
classification of cluster tilted algebras of type $\tilde{A}_n$}
\author{Janine Bastian}
\date{Leibniz Universit\"at Hannover,\\
Institut f\"ur Algebra, Zahlentheorie
und Diskrete Mathematik,\\
Welfengarten 1, D-30167 Hannover, Germany\\
bastian@math.uni-hannover.de}
\begin{document}

\maketitle

\begin{abstract}
We give an explicit description of the mutation classes of quivers of type $\tAn$.
Furthermore, we provide a complete classification of cluster tilted algebras of type
$\tilde{A}_n$ up to derived equivalence. We show that the bounded derived category
of such an algebra depends on four combinatorial parameters of the corresponding quiver.
\end{abstract}

\noindent
{\em 2000 Mathematics Subject Classification:} 16G20, 16E35, 18E30.

\noindent
{\em Keywords:} cluster tilted algebra, quiver mutation, derived equivalence.


\section{Introduction}

A few years ago, Fomin and Zelevinsky introduced the concept of cluster algebras
\cite{Fomin-Zelevinsky} which rapidly became a successful research area.
Cluster algebras nowadays link various areas of mathematics, like combinatorics, Lie theory,
algebraic geometry, representation theory, integrable systems, Teichm\"uller theory, Poisson
geometry and also string theory in physics (via recent work on quivers with superpotentials
\cite{DMZ}, \cite{Labardini}).

In an attempt to 'categorify' cluster algebras, which a priori are combinatorially defined,
cluster categories have been introduced by Buan, Marsh, Reineke, Reiten and Todorov \cite{BMRRT}.
For a quiver $Q$ without loops and oriented 2-cycles and the corresponding path algebra $KQ$
(over an algebraically closed field $K$), the cluster category $\mC_Q$ is the orbit category
of the bounded derived category $D^b(KQ)$ by the functor $\tau^{-1}[1]$, where $\tau$ denotes
the Auslander-Reiten translation and $[1]$ is the shift functor on the triangulated category $D^b(KQ)$.

Important objects in cluster categories are the cluster-tilting objects.
The endomorphism algebras of such objects in the cluster category $\mC_Q$ are called
cluster tilted algebras of type $Q$ \cite{BMR}.
Cluster tilted algebras have several interesting properties, e.g. their representation theory
can be completely understood in terms of the representation theory of the corresponding path
algebra of a quiver (see \cite{BMR}). These algebras have been studied by various authors,
see for instance \cite{ABS1}, \cite{ABS2}, \cite{BMR2} or \cite{CCS}.

\smallskip

In recent years, a focal point in the representation theory of algebras has been the
investigation of derived equivalences of algebras. Since a lot of properties and invariants
of rings and algebras are preserved by derived equivalences, it is important for many purposes
to classify classes of algebras up to derived equivalence, instead of Morita equivalence.
For selfinjective algebras the representation type is preserved under derived equivalences
(see \cite{Krause} and \cite{Rickard3}).
It has been also proved in \cite{Rickard2} that the class of symmetric algebras is closed under
derived equivalences. Additionally, we note that derived equivalent algebras have the same number
of pairwise nonisomorphic simple modules and isomorphic centers.

\smallskip

In this work, we are concerned with the problem of derived equivalence classification of cluster
tilted algebras of type $\tAn$. Such a classification was done for cluster tilted algebras of
type $A_n$ by Buan and Vatne in $2007$ \cite{Buan-Vatne}; see also the work of Murphy on the more
general case of $m$-cluster tilted algebras of type $A_n$ \cite{Murphy}.

Since the quivers of cluster tilted algebras of type $\tAn$ are exactly the quivers in the
mutation classes of $\tAn$, our first aim in this paper is to give a description of the mutation
classes of $\tAn-$quivers; these mutation classes are known to be finite (for example see \cite{FST}).
The second purpose of this note is to describe, when two cluster tilted algebras of type $Q$
have equivalent derived categories, where $Q$ is a quiver whose underlying graph is $\tAn$.

\smallskip

In Definition \ref{description_mutation_class} we present a class $\mQ_n$ of quivers with $n+1$
vertices which includes all non-oriented cycles of length $n+1$. To show that this class contains
all quivers mutation equivalent to some quiver of type $\tAn$ we first prove that this class is
closed under quiver mutation.
Furthermore, we define parameters $r_1, \, r_2, \ s_1$ and $s_2$ for any quiver $Q \in \mQ_n$
in Definition \ref{parameters} and prove that every quiver in $\mQ_n$ with parameters
$r_1, \, r_2, \ s_1$ and $s_2$ can be mutated to a normal form, see Figure~\ref{normal_form},
without changing the parameters.

With the help of the above result we can show that every quiver $Q \in \mQ_n$ with parameters
$r_1, \, r_2, \ s_1$ and $s_2$ is mutation equivalent to some non-oriented cycle with $r:=r_1+2r_2$
arrows in one direction and $s:=s_1+2s_2$ arrows in the other direction.
Hence, if two quivers $Q_1$ and $Q_2$ of $\mQ_n$ have the parameters $r_1, \ r_2, \ s_1, \ s_2$,
respectively $\tilde{r}_1, \ \tilde{r}_2, \ \tilde{s}_1, \ \tilde{s}_2$ and
$r_1+2r_2 = \tilde{r}_1 + 2\tilde{r}_2, \ s_1+2s_2 = \tilde{s}_1 + 2\tilde{s}_2$ (or vice versa),
then $Q_1$ is mutation equivalent to $Q_2$.

The converse of this result, i.e., an explicit description of the mutation classes of quivers
of type $\tAn$, can be shown with the help of Lemma $6.8$ in \cite{FST}.

\smallskip

The main result of the derived equivalence classification of cluster tilted algebras of type
$\tAn$ is the following theorem:

\begin{thm}
Two cluster tilted algebras of type $\tAn$ are derived equivalent if and only if their quivers
have the same parameters $r_1, \ r_2, \ s_1$ and $s_2$ (up to changing the roles of $r_i$ and
$s_i$, $i\in \{1,2\}$).
\end{thm}

We prove that every cluster tilted algebra of type $\tAn$ with parameters $r_1, \, r_2, \ s_1$
and $s_2$ is derived equivalent to a cluster tilted algebra corresponding to a quiver in normal form.
Furthermore, we compute the parameters $r_1, \ r_2, \ s_1$ and $s_2$ as combinatorial derived
invariants for a quiver $Q \in \mQ_n$ with the help of an algorithm defined by Avella-Alaminos
and Gei\ss \ in \cite{Avella-Geiss}.

\medskip

The paper is organized as follows. In Section 2 we collect some basic notions about quiver
mutations.
In Section 3 we present the set $\mQ_n$ of quivers which can be obtained by iterated mutation
from quivers whose underlying graph is of type $\tAn$.
Moreover, we describe, when two quivers of $\mQ_n$ are in the same mutation class.
In the fourth section we describe the cluster tilted algebras of type $\tAn$ and their relations
(as shown in \cite{ABCP}). In Section 5 we first briefly review the fundamental results on
derived equivalences. Afterwards, we prove our main result, i.e., we show, when two cluster tilted
algebras of type $\tAn$ are derived equivalent.

\bigskip

\noindent
{\bf Acknowledgement.} The author would like to thank Thorsten Holm for many helpful suggestions
and discussions about the topics of this work.

\begin{figure}
\centering
\includegraphics[scale=0.65]{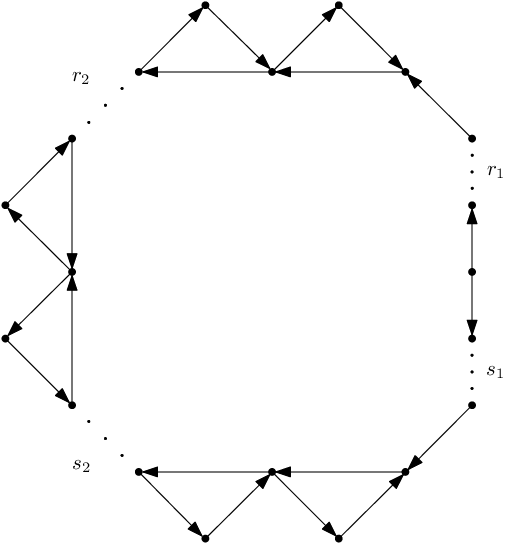}
\caption{Normal form for quivers in $\mQ_n$.}
\label{normal_form}
\end{figure}


\section{Quiver mutations}

First, we present the basic notions for quiver mutations.
A {\em quiver} is a finite directed graph $Q$, consisting of a finite set of vertices $Q_0$
and a finite set of arrows $Q_1$ between them.

Let $Q$ be a quiver and $K$ be an algebraically closed field. We can form the {\em path algebra}
$KQ$, where the basis of $KQ$ is given by all paths in $Q$, including trivial paths $e_i$ of length
zero at each vertex $i$ of $Q$.
Multiplication in $KQ$ is defined by concatenation of paths. Our convention is to read paths from
right to left. For any path $\alpha$ in $Q$ let $s(\alpha)$ denote its start vertex and $t(\alpha)$
its end vertex. Then the product of two paths $\alpha$ and $\beta$ is defined to be the
concatenated path $\alpha \beta$ if $s(\alpha) = t(\beta)$. The unit element of $KQ$ is the sum
of all trivial paths, i.e., $1_{KQ} = \sum\limits_{i \in Q_0} e_i$.

\smallskip

We now recall the definition of {\em quiver mutation} which was introduced by Fomin and
Zelevinsky in \cite{Fomin-Zelevinsky}.

\begin{dfn}
Let $Q$ be a quiver without loops and oriented $2$-cycles.
The {\em mutation} of $Q$ at a vertex $k$ to a new quiver $Q^*$ can be described as follows:
\begin{enumerate}
\item Add a new vertex $k^*$.
\item If there are $r>0$ arrows $i \rightarrow k$, $s>0$ arrows $k \rightarrow j$ and $t \in \Z$
arrows $j \rightarrow i$ in $Q$,
there are $t-rs$ arrows $j \rightarrow i$ in $Q^*$.
(Here, a negative number of arrows means arrows in the opposite direction.)
\item For any vertex $i$ replace all arrows from $i$ to $k$ with arrows from $k^*$ to $i$,
and replace all arrows from $k$ to $i$ with arrows from $i$ to $k^*$.
\item Remove the vertex $k$.
\end{enumerate}
\end{dfn}

Note that mutation at sinks or sources only means changing the direction of all incoming
or outgoing arrows.
Two quivers are called {\em mutation equivalent} ({\em sink/source equivalent}) if one
can be obtained from the other by a finite sequence of mutations (at sinks and/or sources).
The {\em mutation class} of a quiver $Q$ is the class of all quivers mutation equivalent to $Q$.


\section{Mutation classes of $\tAn-$quivers} \label{section_mutation_class}

\begin{rmk}
Quivers of type $\tAn$ are just cycles with $n+1$ vertices.
If the cycle is oriented, then we get the mutation class of $D_{n+1}$ (see \cite{Derksen-Owen}, \cite{FST},
\cite{Fomin-ZelevinskyII} or Type IV in type $D$ in \cite{Vatne}).
If the cycle is non-oriented, we get what we call the mutation classes of $\tAn$.
\end{rmk}

First, we have to fix one drawing of this non-oriented cycle, i.e., one embedding into the plane.
Thus, we can speak of clockwise and anti-clockwise oriented arrows.
But we have to consider that this notation is only unique up to reflection of the cycle, i.e.,
up to changing the roles of clockwise and anti-clockwise oriented arrows.

\begin{lem} 
[Fomin, Shapiro and Thurston, Lemma 6.8 in \cite{FST}] \label{mutation_equivalence}
Let $C_1$ and $C_2$ be two non-oriented cycles, so that in $C_1$ (resp. $C_2$)
there are $s$ (resp. $\tilde{s}$) arrows oriented in the clockwise direction and $r$
(resp. $\tilde{r}$) arrows oriented in the anti-clockwise direction.
Then $C_1$ and $C_2$ are mutation equivalent if and only if the unordered pairs
$\{r,s\}$ and $\{\tilde{r}, \tilde{s}\}$ coincide.
\end{lem}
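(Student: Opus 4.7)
The plan is to split the biconditional into two directions. The forward implication (same $\{r,s\}$ implies mutation equivalent) admits a direct combinatorial proof using only sink/source mutations, while I would inherit the reverse implication from the surface model of \cite{FST}.

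For the forward implication, after possibly reflecting one of the cycles (which swaps the roles of $r$ and $s$), I may assume $r=\tilde r$ and $s=\tilde s$. Fix planar embeddings so that each cycle is encoded by a cyclic word of labels $\mathrm{CW}/\mathrm{ACW}$ with $r$ of one kind and $s$ of the other. The key observation I would establish is that a vertex $k$ of a non-oriented cycle is a sink or a source precisely when its two incident edges carry different labels, and that mutation at such a $k$ preserves the cycle structure and simply transposes those two labels in the cyclic word. This is a short direct check: at a sink both incident arrows reverse, and relative to the embedding the $\mathrm{CW}$ and $\mathrm{ACW}$ labels switch places. Consequently, sink/source mutations act on the cyclic word by adjacent transpositions of distinct entries, and such transpositions generate the full rearrangement action on a binary multiset (standard bubble sort). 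Since cyclic rotations are absorbed by quiver isomorphism, any two cyclic words with the same multiset can be linked, giving that $C_1$ is mutation equivalent to $C_2$.

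For the reverse implication, the cleanest argument is topological: by \cite{FST}, quivers in the mutation class of $\tAn$ correspond to ideal triangulations of an annulus whose two boundary circles carry $r$ and $s$ marked points respectively, and quiver mutation realises a triangulation flip. Since a flip does not change the underlying bordered surface, the unordered pair $\{r,s\}$ is a mutation invariant, so cycles with different $\{r,s\}$ lie in distinct mutation classes. The main obstacle in a self-contained combinatorial proof would precisely be this direction: one would need a mutation invariant defined on all of $\mQ_n$ (not just on cycles) that restricts to $\{r,s\}$ on cycles. The parameters $r_1, r_2, s_1, s_2$ introduced in the next definition of the paper eventually serve this role, but invoking them here would be circular, so citing \cite{FST} is the natural route.
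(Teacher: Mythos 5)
Your proposal is correct, but it is worth pointing out that the paper does not prove this lemma at all: it is imported verbatim as Lemma 6.8 of \cite{FST} and later serves as the external input for the only-if part of Theorem \ref{description_mutation_classes}. Your forward direction supplies an elementary argument the paper never spells out, and it checks out: a vertex of a non-oriented cycle is a sink or source precisely when its two incident edges carry different orientation labels relative to the fixed embedding; mutation there only reverses the two incident arrows (no new arrows arise, since no path of length two passes through a sink or source, so the cycle shape is preserved even for triangles and double arrows); and this transposes the two distinct adjacent labels in the cyclic word. Adjacent transpositions of distinct entries suffice to sort a binary cyclic word, rotations and the reflection are absorbed into quiver isomorphism and into the unordered pair $\{r,s\}$, and a non-oriented cycle always has at least one sink or source, so sufficiency follows. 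For necessity you correctly diagnose the issue: one needs a genuine mutation invariant, the parameters $r_1,r_2,s_1,s_2$ of Definition \ref{parameters} cannot be invoked without circularity (the paper deduces the only-if direction of its own classification \emph{from} this lemma), and the annulus/triangulation model of \cite{FST} is the natural source of such an invariant. In effect your treatment is a strict refinement of the paper's: one direction made self-contained, the other resting on the same citation the paper uses for both.
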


Thus, two non-oriented cycles of length $n+1$ are mutation equivalent if and only if
they have the same parameters $r$ and $s$ (up to changing the roles of $r$ and $s$).

\bigskip

Next we will provide an explicit description of the mutation classes of $\tAn-$quivers.
For this we need a description of the mutation class of quivers of type $A_k$ and we use
the following one which is given in \cite{Buan-Vatne}:

\begin{itemize}
\item each quiver has $k$ vertices,
\item all non-trivial cycles are oriented and of length 3,
\item a vertex has at most four incident arrows,
\item if a vertex has four incident arrows, then two of them belong to one
oriented $3$-cycle, and the other two belong to another oriented $3$-cycle,
\item if a vertex has three incident arrows, then two of them belong to an oriented
$3$-cycle, and the third arrow does not belong to any oriented $3$-cycle.
\end{itemize}
By a {\em cycle} in the second condition we mean a cycle in the underlying
graph, not passing through the same edge twice. In particular,
this condition excludes multiple arrows.
Note that another description of the mutation class of quivers of type $A$
is given in \cite{Seven}.

\medskip

Now we can formulate the description of the mutation classes of $\tAn-$quivers
which is a similar description as for Type IV in type $D$ in \cite{Vatne}.

\begin{dfn} \label{description_mutation_class}
Let $\mQ_n$ be the class of connected quivers with $n+1$ vertices which satisfy the
following conditions (see Figure~\ref{mutation_class_quiver} for an illustration):
\begin{itemize}
\item[\rn1)] There exists precisely one full subquiver which is a non-oriented cycle of
length $\geq 2$. Thus, if the length is two, it is a double arrow.
\item[\rn2)] For each arrow $x \xrightarrow{\alpha} y$ in this non-oriented cycle, there may
(or may not) be a vertex $z_{\alpha}$ which is not on the non-oriented cycle, such that
there is an oriented $3$-cycle of the form
\begin{center}
\includegraphics[scale=0.9]{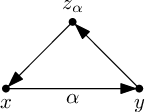}
\end{center}
Apart from the arrows of these oriented $3$-cycles there are no other arrows
incident to vertices on the non-oriented cycle.
\item[\rn3)] If we remove all vertices in the non-oriented cycle and their incident
arrows, the result is a disjoint union of quivers $Q_1, Q_2, \dots$, one for each
$z_{\alpha}$ (which we call $Q_{\alpha}$ in the following).
These are quivers of type $A_{k_\alpha}$ for ${k_\alpha} \geq 1$, and the
vertices $z_{\alpha}$ have at most two incident arrows in these quivers.
Furthermore, if a vertex $z_{\alpha}$ has two incident arrows in such a quiver,
then $z_{\alpha}$ is a vertex in an oriented $3$-cycle in $Q_{\alpha}$.
\end{itemize}
\end{dfn}

\begin{figure}
\centering
\includegraphics[scale=0.8]{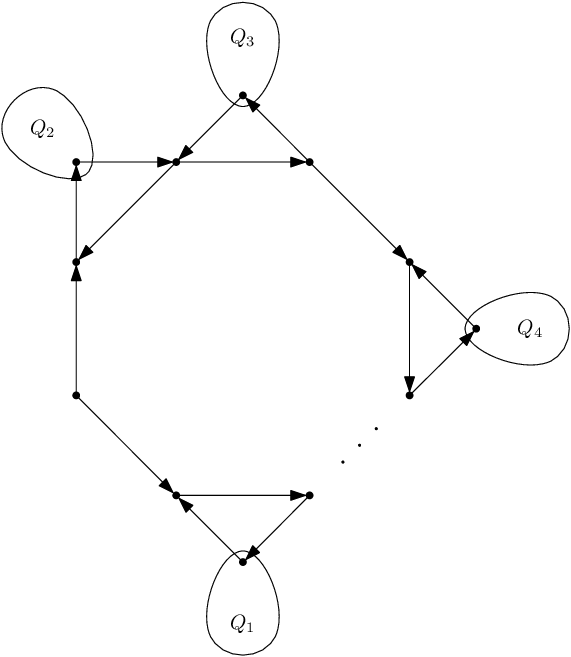}
\caption{Quiver in $\mQ_n$.}
\label{mutation_class_quiver}
\end{figure}

Our convention is to choose only one of the double arrows to be part of the
oriented $3$-cycle in the following case:
\begin{center}
\includegraphics[scale=0.9]{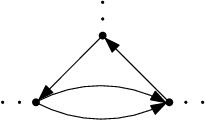}
\end{center}

\begin{nt}
Note that whenever we draw an edge \includegraphics[scale=0.9]{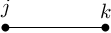}
the direction of the arrow between $j$ and $k$ is not important for this situation;
and whenever we draw a cycle
\begin{center}
\includegraphics[scale=0.9]{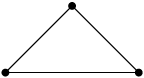}
\end{center}
it is an oriented $3$-cycle.
\end{nt}

\begin{lem}
$\mQ_n$ is closed under quiver mutation.
\end{lem}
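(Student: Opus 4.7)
Since quiver mutation at a vertex $k$ only modifies arrows that are incident to $k$ or that complete a $2$-path through $k$, the argument is local. My plan is to fix $Q\in\mQ_n$ and a vertex $k$, examine the local picture of $Q$ around $k$, apply the Fomin--Zelevinsky rule explicitly, and verify that conditions (\rn1)--(\rn3) of Definition~\ref{description_mutation_class} still hold in the mutated quiver. I would partition the argument into three cases according to the role of $k$ in the decomposition of $Q$: (A) $k$ lies on the unique non-oriented cycle $C$; (B) $k$ is a spike vertex $z_\alpha$ for some arrow $\alpha$ of $C$; (C) $k$ lies strictly inside some type-$A$ component $Q_\alpha$ and is not a spike vertex.

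Case (C) is the easiest and essentially reduces to the description of the mutation class of type $A$ recalled just before Definition~\ref{description_mutation_class}. No arrow incident to $k$ touches $C$ or any spike $3$-cycle, so $C$ and all spikes are untouched, and mutation simply replaces $Q_\alpha$ by another type-$A$ quiver satisfying the Buan--Vatne conditions. The only extra verification is that the attaching vertex $z_\alpha$ still has at most two incident arrows in the new $Q_\alpha$ and, if it has two, that it lies in an oriented $3$-cycle there; this is a short inspection depending on whether $k$ is at graph-distance zero, one, or two from $z_\alpha$ inside $Q_\alpha$.

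Case (B) requires enumerating the possible local pictures around $z_\alpha$ and mutating directly. Since $z_\alpha$ carries two arrows to $C$ inside the spike $3$-cycle plus at most two further arrows inside $Q_\alpha$, its valence is at most four. For each configuration the Fomin--Zelevinsky rule produces reversed spike arrows together with at most one new arrow between a neighbour of $z_\alpha$ in $Q_\alpha$ and a vertex of $C$; one then checks by hand that the outcome is again a spike $3$-cycle on $C$, possibly now anchored at a different arrow of $C$ and attached to a suitably modified type-$A$ component, so conditions (\rn2)--(\rn3) are maintained.

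Case (A), mutation at a cycle vertex, is where the main difficulty lies. Here $k$ has two cycle arrows, between zero and two attached spikes, and thus up to four incident arrows. I would enumerate local configurations by two choices: the pair of orientations of the two cycle arrows at $k$ (sink, source, or through) and the number of attached spikes; this yields a finite list of subcases, each of which is resolved by a direct computation. Generically, the length of $C$ is preserved and $k$ is simply replaced by the new vertex $k^*$ in a non-oriented cycle of the same length, with spikes possibly reassigned to adjacent cycle arrows. The subtle subcases are those in which a spike $3$-cycle is absorbed into $C$ (or, conversely, an arrow of $C$ sheds a new spike): one must then verify that after mutation there is still precisely one full non-oriented subcycle, that every other cycle of the underlying graph is an oriented $3$-cycle of the spike form, and that the attaching condition on the new $z_\alpha$'s inside the $Q_\alpha$'s is preserved. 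The hardest subcase is a through vertex carrying two spikes whose adjacent cycle arrows have opposite orientations along $C$; once this is handled, the remaining sub-cases follow by analogous but shorter bookkeeping, completing the proof.
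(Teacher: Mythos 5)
Your overall strategy and case division coincide with the paper's: reduce to a local analysis at the mutated vertex and split according to whether it lies strictly inside a type-$A$ component, is a connecting vertex $z_\alpha$, or lies on the non-oriented cycle (the last case subdivided by the number of attached oriented $3$-cycles, exactly as in the paper's cases 2--4). However, your predicted outcome in case (B) is wrong, and the verification you describe there would fail. When one mutates at $z_\alpha$, the arrow $\alpha$ of the non-oriented cycle is \emph{deleted} by the mutation rule (the composition through $z_\alpha$ cancels it), the two spike arrows are reversed, and $z_\alpha$ is absorbed into the non-oriented cycle, which becomes one arrow \emph{longer}; the spike $3$-cycle at $\alpha$ does not survive in any reanchored form. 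So what must be checked in case (B) is condition \rn1) with a lengthened cycle, together with the new spikes created at the former neighbours of $z_\alpha$ inside $Q_\alpha$, not ``conditions \rn2)--\rn3) with a spike $3$-cycle again on $C$'' as you assert. Symmetrically, the phenomenon you list as a subtle subcase of (A), a spike $3$-cycle being absorbed into $C$, never occurs when mutating at a cycle vertex; what does occur there is the inverse move, namely a bare through vertex being expelled from $C$ (the cycle shortens by one and that vertex becomes a new $z_\alpha$). These two moves are mutually inverse, which is exactly how the paper economizes on subcases, and getting the direction right matters later: it is the source of the relation $r=r_1+2r_2$.

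Two smaller points. First, ``a through vertex whose adjacent cycle arrows have opposite orientations along $C$'' is contradictory: a through vertex is by definition one whose two cycle arrows have the same orientation along $C$; if they are opposite, the vertex is a sink or source of $C$. Second, your plan never addresses the degenerate situations: the non-oriented cycle may be a double arrow (so the configurations in your case (A) need separate pictures and the convention fixed after Definition \ref{description_mutation_class} is needed), and when the cycle shortens one must note that it had length at least $3$ beforehand, so the result still has length at least $2$. With the case-(B) outcome corrected and these degeneracies added, your plan does fill in to the paper's proof.
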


\begin{proof}
Let $Q$ be a quiver in $\mQ_n$ and let $i$ be some vertex of $Q$.
The subquivers $Q_1$ and $Q_2$ highlighted in the pictures
are quivers of type $A$.

\medskip

If $i$ is a vertex in one of the quivers $Q_{\alpha}$ of type $A$,
but not one of the vertices $z_{\alpha}$ connecting this quiver of type $A$
to the rest of the quiver $Q$, then the mutation at $i$ leads to a quiver
$Q^* \in \mQ_n$ since type $A$ is closed under quiver mutation.

It therefore suffices to check what happens when we mutate at the other vertices,
and we will consider the following four cases.

\medskip

1) Let $i$ be one of the vertices $z_{\alpha}$, hence not on the non-oriented
cycle. For the situation where the quiver $Q_{\alpha}$ of type $A$ attached to
$z_{\alpha}$ consists only of one vertex, we can look at the first mutated quiver
in case 2) below since quiver mutation is an involution.
Thus, we have the following three cases,

\medskip

\begin{minipage}{0.96\linewidth}
\hfill
\begin{minipage}{0.3\linewidth}
\begin{center}
\includegraphics[scale=0.8]{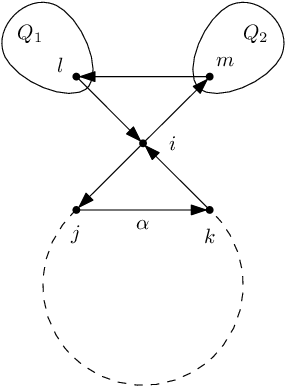}
\end{center}
\end{minipage}
\hfill
\begin{minipage}{0.03\linewidth}
or
\end{minipage}
\hfill
\begin{minipage}{0.3\linewidth}
\begin{center}
\includegraphics[scale=0.8]{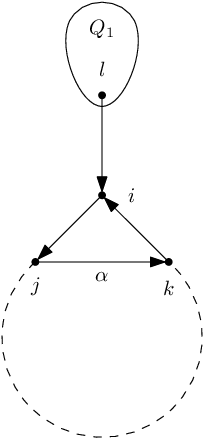}
\end{center}
\end{minipage}
\hfill
\begin{minipage}{0.03\linewidth}
or
\end{minipage}
\hfill
\begin{minipage}{0.3\linewidth}
\begin{center}
\includegraphics[scale=0.8]{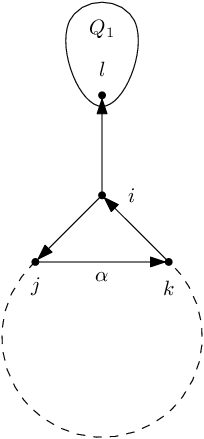}
\end{center}
\end{minipage}
\end{minipage}

\bigskip

Then the mutation at $i$ leads to the following three quivers which have
a non-oriented cycle one arrow longer than for $Q$,
and this is indeed a non-oriented cycle since the arrows
$j \rightarrow i \rightarrow k$ have the same orientation as $\alpha$ had
before.

\medskip

\begin{minipage}{0.96\linewidth}
\hfill
\begin{minipage}{0.34\linewidth}
\begin{center}
\includegraphics[scale=0.8]{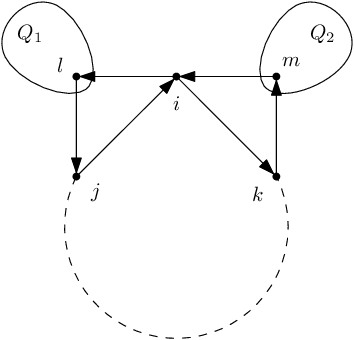}
\end{center}
\end{minipage}
\hfill
\begin{minipage}{0.03\linewidth}
or
\end{minipage}
\hfill
\begin{minipage}{0.28\linewidth}
\begin{center}
\includegraphics[scale=0.8]{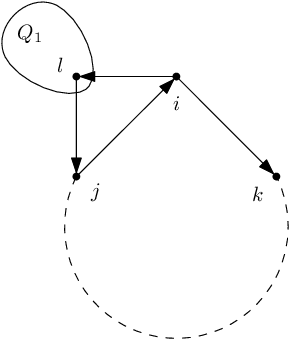}
\end{center}
\end{minipage}
\hfill
\begin{minipage}{0.03\linewidth}
or
\end{minipage}
\hfill
\begin{minipage}{0.28\linewidth}
\begin{center}
\includegraphics[scale=0.8]{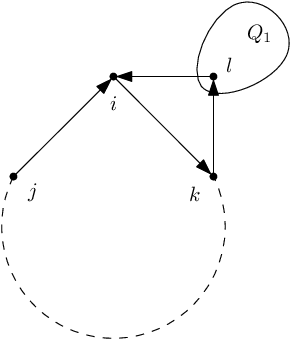}
\end{center}
\end{minipage}
\end{minipage}

\bigskip

The vertices $l$ and $m$ have at most two incident arrows in the quivers
$Q_1$ and $Q_2$ since they had at most four resp. three incident arrows in $Q$
(see the description of quivers of type $A$).
Furthermore, if $l$ or $m$ has two incident arrows in the quiver $Q_1$ or $Q_2$,
then these two arrows form an oriented $3$-cycle as in $Q$.
Thus, the mutated quiver $Q^*$ is also in $\mQ_n$.

\medskip

2) Let $i$ be a vertex on the non-oriented cycle, and not part of any
oriented $3$-cycle. Then the following three cases can occur,

\medskip

\begin{minipage}{0.96\linewidth}
\hfill
\begin{minipage}{0.3\linewidth}
\begin{center}
\includegraphics[scale=0.8]{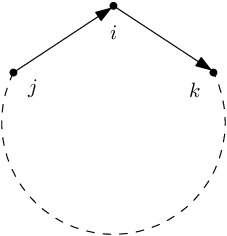}
\end{center}
\end{minipage}
\hfill
\begin{minipage}{0.03\linewidth}
or
\end{minipage}
\hfill
\begin{minipage}{0.3\linewidth}
\begin{center}
\includegraphics[scale=0.8]{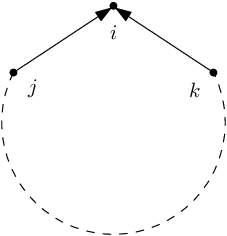}
\end{center}
\end{minipage}
\hfill
\begin{minipage}{0.03\linewidth}
or
\end{minipage}
\hfill
\begin{minipage}{0.3\linewidth}
\begin{center}
\includegraphics[scale=0.8]{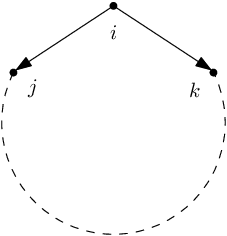}
\end{center}
\end{minipage}
\end{minipage}

\bigskip

and the mutation at $i$ leads to

\medskip

\begin{minipage}{0.96\linewidth}
\hfill
\begin{minipage}{0.3\linewidth}
\begin{center}
\includegraphics[scale=0.8]{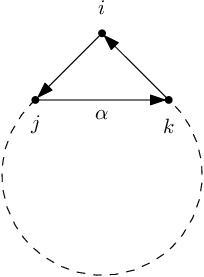}
\end{center}
\end{minipage}
\hfill
\begin{minipage}{0.03\linewidth}
or
\end{minipage}
\hfill
\begin{minipage}{0.3\linewidth}
\begin{center}
\includegraphics[scale=0.8]{non_oriented_3}
\end{center}
\end{minipage}
\hfill
\begin{minipage}{0.03\linewidth}
or
\end{minipage}
\hfill
\begin{minipage}{0.3\linewidth}
\begin{center}
\includegraphics[scale=0.8]{non_oriented_2}
\end{center}
\end{minipage}
\end{minipage}

\bigskip

If $i$ is a sink or a source in $Q$, the non-oriented cycle in $Q^*$ is of the same
length as before and $Q^*$ is in $\mQ_n$.
If there is a path $j \rightarrow i \rightarrow k$ in $Q$, then the mutation at $i$
leads to a quiver $Q^*$ which has a non-oriented cycle one arrow shorter than in $Q$.

Note that in this case the non-oriented cycle in $Q$ consists of at least three arrows
and thus, the non-oriented cycle in $Q^*$ has at least two arrows.
Thus, the mutated quiver $Q^*$ is also in $\mQ_n$.

\medskip

3) Let $i$ be a vertex on the non-oriented cycle which is part of exactly one
oriented $3$-cycle. Then four cases can occur, but two of them have been dealt with
by the second and third mutated quiver in case 1) since quiver mutation is an involution.
Thus, we only have to consider the following two situations and their special
cases where the non-oriented cycle is a double arrow.

\medskip

\begin{minipage}{0.96\linewidth}
\hfill
\begin{minipage}{0.44\linewidth}
\begin{center}
\includegraphics[scale=0.8]{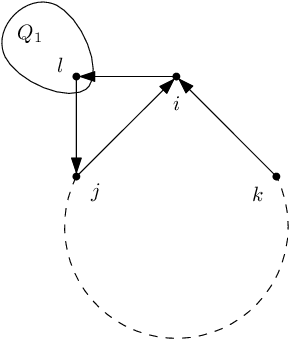}
\end{center}
\end{minipage}
\hfill
\begin{minipage}{0.1\linewidth}
$\substack{\textrm{mutation } \\ \longleftrightarrow  \\ \textrm{at } i}$
\end{minipage}
\hfill
\begin{minipage}{0.44\linewidth}
\begin{center}
\includegraphics[scale=0.8]{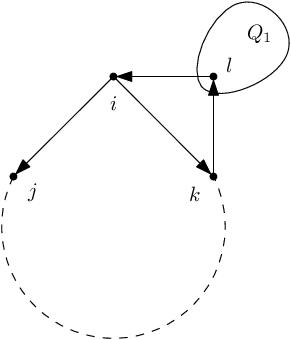}
\end{center}
\end{minipage}
\end{minipage}

\bigskip

\begin{minipage}{0.96\linewidth}
\hfill
\begin{minipage}{0.44\linewidth}
\begin{center}
\includegraphics[scale=0.8]{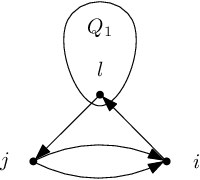}
\end{center}
\end{minipage}
\hfill
\begin{minipage}{0.1\linewidth}
$\substack{\textrm{mutation } \\ \longleftrightarrow  \\ \textrm{at } i}$
\end{minipage}
\hfill
\begin{minipage}{0.44\linewidth}
\begin{center}
\includegraphics[scale=0.8]{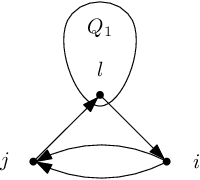}
\end{center}
\end{minipage}
\end{minipage}

\bigskip

After mutating at vertex $i$, the non-oriented cycle has the same length as before.
Moreover, $l$ has the same number of incident arrows as before.
Thus, $Q^*$ is in $\mQ_n$.

\medskip

4) Let $i$ be a vertex on the non-oriented cycle which is part of two
oriented $3$-cycles. Then three cases can occur, but one of them has been dealt with
by the first mutated quiver in case 1).
Thus, we only have to consider the following two situations and their
special cases where the non-oriented cycle is a double arrow.

\medskip

\begin{minipage}{0.96\linewidth}
\hfill
\begin{minipage}{0.44\linewidth}
\begin{center}
\includegraphics[scale=0.8]{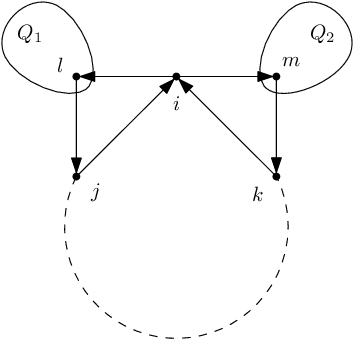}
\end{center}
\end{minipage}
\hfill
\begin{minipage}{0.1\linewidth}
$\substack{\textrm{mutation } \\ \longleftrightarrow  \\ \textrm{at } i}$
\end{minipage}
\hfill
\begin{minipage}{0.44\linewidth}
\begin{center}
\includegraphics[scale=0.8]{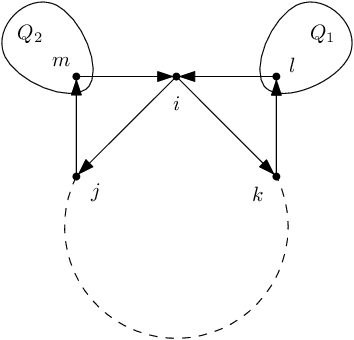}
\end{center}
\end{minipage}
\end{minipage}

\bigskip

\begin{minipage}{0.96\linewidth}
\hfill
\begin{minipage}{0.44\linewidth}
\begin{center}
\includegraphics[scale=0.8]{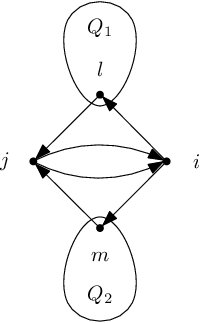}
\end{center}
\end{minipage}
\hfill
\begin{minipage}{0.1\linewidth}
$\substack{\textrm{mutation } \\ \longleftrightarrow  \\ \textrm{at } i}$
\end{minipage}
\hfill
\begin{minipage}{0.44\linewidth}
\begin{center}
\includegraphics[scale=0.8]{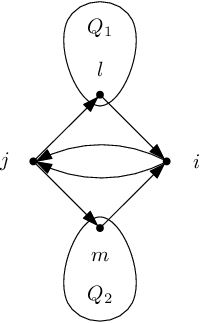}
\end{center}
\end{minipage}
\end{minipage}

\bigskip

After mutating at vertex $i$, the non-oriented cycle has the same length as before.
Moreover, $l$ and $m$ have the same number of incident arrows as before.
Thus, the mutated quiver $Q^*$ is in~$\mQ_n$.
\end{proof}

\begin{rmk}
It is easy to see that all orientations of a circular quiver of type $\tAn$ are in $\mQ_n$
(except the oriented case; but this leads to the mutation class of $D_{n+1}$).
Since $\mQ_n$ is closed under quiver mutation every quiver mutation equivalent to some
quiver of type $\tAn$ is in $\mQ_n$, too.
\end{rmk}

Now we fix one drawing of a quiver $Q \in \mQ_n$, i.e., one embedding into the plane,
without arrow-crossing. Thus, we can again speak of clockwise and anti-clockwise
oriented arrows of the non-oriented cycle. But we have to consider that this
notation is only unique up to reflection of the non-oriented cycle, i.e., up to
changing the roles of clockwise and anti-clockwise oriented arrows.
We define four parameters $r_1, \ r_2, \ s_1$ and $s_2$ for a quiver $Q \in \mQ_n$ as follows:

\begin{dfn} \label{parameters}
Let $r_1$ be the number of arrows which are not part of any oriented $3$-cycle and
which fulfill one of the following two conditions:
\begin{enumerate}
\item These arrows are part of the non-oriented cycle and they are oriented in the anti-clockwise
direction, see the left picture of Figure~\ref{parameter1}.
\item These arrows are not part of the non-oriented cycle, but they are attached to an oriented
$3$-cycle $C$ which shares one arrow $\alpha$ with the non-oriented cycle and $\alpha$ is
oriented in the anti-clockwise direction, see the right picture of Figure~\ref{parameter1}.

In this sense, 'attached' means that these arrows are part of the quiver $Q_{\alpha}$
of type $A$ which shares the vertex $z_{\alpha}$ with the cycle $C$ (see Definition
\ref{description_mutation_class}).
\end{enumerate}

Let $r_2$ be the number of oriented $3$-cycles which fulfill one of the
following two conditions:

\begin{enumerate}
\item These cycles share one arrow $\alpha$ with the non-oriented cycle and $\alpha$
is oriented in the anti-clockwise direction, see the left picture of Figure~\ref{parameter2}.
\item These cycles are attached to an oriented $3$-cycle $C$ sharing one arrow $\alpha$
with the non-oriented cycle and $\alpha$ is oriented in the anti-clockwise direction,
see the right picture of Figure~\ref{parameter2}.
Here, 'attached' is in the same sense as above.
\end{enumerate}

Similarly we define the parameters $s_1$ and $s_2$ with 'clockwise' instead of
'anti-clockwise'.
\end{dfn}

\begin{figure}
\centering
\setlength{\tabcolsep}{2em}
\begin{tabular}{cc}
\includegraphics[scale=0.8]{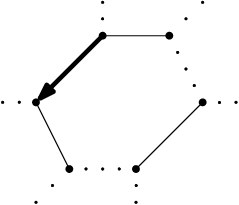}
&
\includegraphics[scale=0.8]{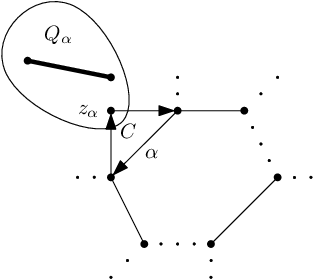}
\end{tabular}
\caption{Illustration for parameter $r_1$.}
\label{parameter1}
\end{figure}

\begin{figure}
\centering
\setlength{\tabcolsep}{2em}
\begin{tabular}{cc}
\includegraphics[scale=0.8]{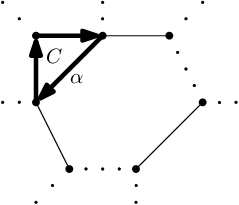}
&
\includegraphics[scale=0.8]{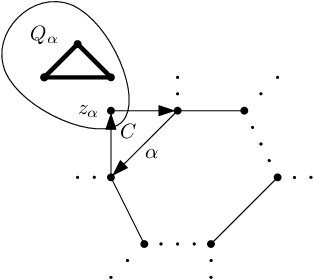}
\end{tabular}
\caption{Illustration for parameter $r_2$.}
\label{parameter2}
\end{figure}

\begin{eg}
We denote the arrows which count for the parameter $r_1$ by \includegraphics[scale=0.9]{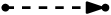}
and the arrows which count for $s_1$ by \includegraphics[scale=0.9]{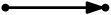}.
Furthermore, the oriented $3$-cycles of $r_2$ are denoted by \includegraphics[scale=0.9]{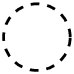}
and the oriented $3$-cycles of $s_2$ are denoted by \includegraphics[scale=0.9]{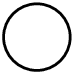}.
Let $Q \in \mQ_{16}$ be a quiver of the following form
\begin{center}
\includegraphics[scale=0.9]{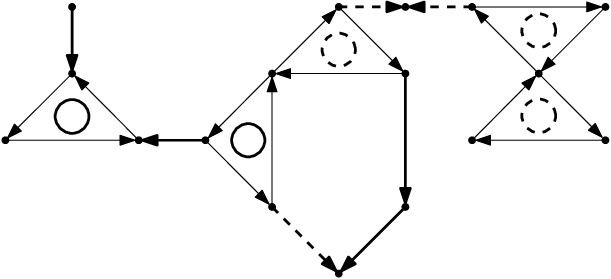}
\end{center}
Then we get $r_1=3, \ r_2=3, \ s_1=4$ and $s_2=2$.
\end{eg}

\begin{lem} \label{mutate_to_normalform}
If $Q_1$ and $Q_2$ are quivers in $\mQ_n$, and $Q_1$ and $Q_2$ have the same parameters
$r_1, \ r_2, \ s_1$ and $s_2$ (up to changing the roles of $r_i$ and $s_i$, $i\in \{1,2\}$),
then $Q_2$ can be obtained from $Q_1$ by iterated mutation, where all the intermediate
quivers have the same parameters as well.
\end{lem}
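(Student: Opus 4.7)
My plan is to define, for each admissible quadruple $(r_1,r_2,s_1,s_2)$, a distinguished representative in $\mQ_n$ -- the normal form of Figure~\ref{normal_form} -- and to show that every $Q \in \mQ_n$ with these parameters can be mutated into this normal form through a sequence of mutations each of which preserves the four parameters individually (so, in particular, every intermediate quiver also lies in $\mQ_n$). Given parameter-preserving sequences $Q_1 \to N$ and $Q_2 \to N$ reaching the common normal form $N$, concatenating the first with the reverse of the second yields the required path from $Q_1$ to $Q_2$.

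The reduction to normal form proceeds in two stages. In the first stage I would normalize the attached type-$A$ tails $Q_\alpha$: using mutations at vertices of $Q_\alpha$ that are neither on the non-oriented cycle nor at the connecting vertex $z_\alpha$, I apply the Buan--Vatne description of mutation classes of type $A$ to bring each tail into a prescribed standard shape. The key parameter-preservation observation at this stage is that the contribution of a tail to $r_1$ (respectively $s_1$, $r_2$, $s_2$) depends only on the orientation of $\alpha$ together with the number of free arrows and oriented $3$-cycles in the tail; I then choose intra-tail mutations that preserve these two numbers individually -- for instance, sink/source mutations, which cannot create or destroy an oriented $3$-cycle since no sink or source can lie in one -- and supplement them with carefully chosen non-sink/source moves arranged in inverse pairs bracketing any configuration that might temporarily leave the parameter class. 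In the second stage I use the mutations of cases 3 and 4 from the proof of the closure lemma -- mutation at a cycle vertex lying in one or two oriented $3$-cycles -- to slide $3$-cycles along the non-oriented cycle. A direct local inspection shows that these moves leave the length and the arrow orientations of the non-oriented cycle unchanged, together with the attached tails, so that $r_1, r_2, s_1$ and $s_2$ are all preserved. The mutations of cases 1 and 2 of the closure lemma (which change the cycle length and in general shift weight between $r_1$ and $r_2$, or between $s_1$ and $s_2$, while preserving only the sum $r_1 + 2r_2$ respectively $s_1 + 2s_2$) are deliberately avoided.

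The main obstacle is the combinatorial bookkeeping: one has to verify, case by case using the local pictures from the proof of the closure of $\mQ_n$ under mutation, that each move proposed above really does preserve $r_1, r_2, s_1, s_2$, and that the available safe moves are flexible enough to achieve the standard shape prescribed by Figure~\ref{normal_form}. The delicate point is the interaction between the sliding moves on the cycle and the tails attached at the moving triangle: one must check that the tail is carried along so that its contribution to the parameters is unaffected, and that the intermediate quivers still satisfy the axioms of Definition~\ref{description_mutation_class}. Together with an induction on a suitable complexity measure of the quiver (for instance, the total number of vertices lying off the non-oriented cycle), these verifications complete the reduction to normal form and hence the proof of the lemma.
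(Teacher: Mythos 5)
Your overall architecture --- reduce both quivers to the common normal form of Figure~\ref{normal_form} by parameter-preserving mutations, then concatenate one sequence with the reverse of the other --- is exactly the paper's. But the execution has a fatal gap: you deliberately forbid the mutations at the connecting vertices $z_\alpha$ (your ``case 1''), on the grounds that they only preserve the sums $r_1+2r_2$ and $s_1+2s_2$. That is wrong for case 1, and avoiding these mutations makes the normal form unreachable. Definition~\ref{parameters} is set up precisely so that arrows and oriented $3$-cycles lying in a tail $Q_\alpha$ already count toward $r_1,r_2$ (or $s_1,s_2$) according to the orientation of the arrow $\alpha$ at which the tail is attached; mutating at $z_\alpha$ moves this material onto the non-oriented cycle with the same orientation as $\alpha$, so all four parameters are preserved \emph{individually} --- this is the content of Steps~2 and~3 of the paper's proof, and it is where the whole point of the ``attached'' clauses in Definition~\ref{parameters} lives. (Your suspicion is correct only for the non-sink/source case-2 mutations, which trade two free cycle arrows for one $3$-cycle.) Avoiding the $z_\alpha$-mutations is self-defeating because the normal form has a non-oriented cycle of length $r_1+s_1+r_2+s_2$, whereas a general $Q\in\mQ_n$ with these parameters may have a much shorter one (e.g.\ a double arrow carrying a long type-$A$ tail). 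Your Stage~1 only rearranges tails internally and your Stage~2 only slides $3$-cycles already on the non-oriented cycle; neither ever lengthens that cycle, so from such a quiver your moves can never reach the normal form.

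A second problem: handling awkward intra-tail configurations by ``non-sink/source moves arranged in inverse pairs bracketing any configuration that might temporarily leave the parameter class'' contradicts the statement being proved, which requires \emph{every} intermediate quiver to have the same parameters; a temporary excursion is not permitted, and a mutation immediately undone by its inverse accomplishes nothing. The paper instead exhibits, for each local configuration (a vertex between two neighbouring $3$-cycles, the vertex $z_\alpha$ joining a $3$-cycle or a free arrow to the cycle, sinks and sources, and the shared vertex of two adjacent $3$-cycles whose cycle arrows point in converse directions), one explicit mutation together with a direct check that it fixes $r_1, r_2, s_1, s_2$. Your blanket claim that all case~3 and case~4 mutations preserve the parameters needs the same care: the paper only uses the case~4 mutation at a shared vertex where the two cycle arrows have opposite orientations, where it acts like a sink/source mutation; applied elsewhere such a mutation can change the orientation of cycle arrows and hence the parameters.
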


\begin{proof}
It is enough to show that all quivers in $\mQ_n$ with parameters $r_1, \ r_2, \ s_1$
and $s_2$ can be mutated to a quiver in {\em normal form}, see Figure~\ref{normal_form},
without changing the parameters $r_1, \ r_2, \ s_1$ and $s_2$.
In such a quiver, $r_1$ is the number of anti-clockwise arrows in the non-oriented cycle
which do not share any arrow with an oriented $3$-cycle and
$s_1$ is the number of clockwise arrows in the non-oriented cycle which do not share any
arrow with an oriented $3$-cycle.
Furthermore, $r_2$ is the number of oriented $3$-cycles sharing one arrow $\alpha$ with
the non-oriented cycle and $\alpha$ is oriented in the anti-clockwise direction and
$s_2$ is the number of oriented $3$-cycles sharing one arrow $\beta$ with the non-oriented
cycle and $\beta$ is oriented in the clockwise direction (see Definition~\ref{parameters}).

\medskip

We divide this process into five steps.

\medskip

\begin{minipage}{0.96\linewidth}
\begin{minipage}{0.13\linewidth}
Step $1$:
\end{minipage}%
\begin{minipage}[t]{0.87\linewidth}
Let $Q$ be a quiver in $\mQ_n$.
We move all oriented $3$-cycles of $Q$ sharing no arrow with the non-oriented cycle
towards the oriented $3$-cycle which is attached to them and which shares one arrow
with the non-oriented cycle.
\end{minipage}
\end{minipage}

\bigskip

\begin{minipage}{0.96\linewidth}
\begin{minipage}{0.13\linewidth}
Method:
\end{minipage}%
\begin{minipage}[t]{0.87\linewidth}
Let $C$ and $C^{\prime}$ be a pair of neighbouring oriented $3$-cycles in $Q$
(i.e., no arrow in the path between them is part of an oriented $3$-cycle) such
that the length of the path between them is at least one.
We want to move $C$ and $C^{\prime}$ closer together by mutation.
\end{minipage}
\end{minipage}

\bigskip

\begin{minipage}{0.96\linewidth}
\begin{minipage}{0.13\linewidth}
\
\end{minipage}%
\begin{minipage}[c]{0.87\linewidth}
\begin{center}
\includegraphics[scale=0.9]{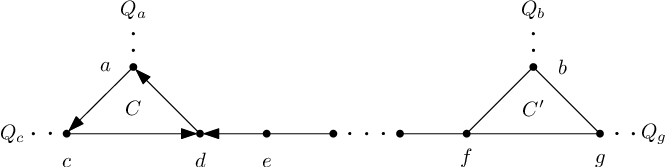}
\end{center}
\end{minipage}
\end{minipage}

\bigskip

\begin{minipage}{0.96\linewidth}
\begin{minipage}{0.13\linewidth}
\
\end{minipage}%
\begin{minipage}[c]{0.87\linewidth}
In the picture the $Q_i$ are subquivers of $Q$.
Mutating at $d$ will produce a quiver $Q^*$ which looks like this:
\end{minipage}
\end{minipage}

\bigskip

\begin{minipage}{0.96\linewidth}
\begin{minipage}{0.13\linewidth}
\ 
\end{minipage}%
\begin{minipage}[t]{0.87\linewidth}
\begin{center}
\includegraphics[scale=0.9]{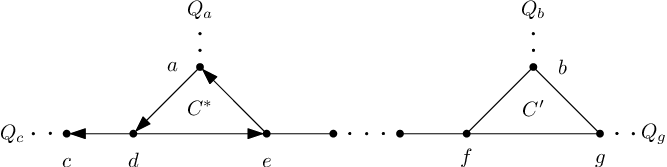}
\end{center}
\end{minipage}
\end{minipage}

\bigskip

\begin{minipage}{0.96\linewidth}
\begin{minipage}{0.13\linewidth}
\ 
\end{minipage}%
\begin{minipage}[t]{0.87\linewidth}
Thus, the length of the path between $C^*$ and $C^{\prime}$ decreases by $1$ and
there is a path of length one between $C^*$ and $Q_c$.
Note that the arguments for a quiver with arrow $d \rightarrow e$ are analogous
and that these mutations can also be used if the arrows between $d$ and $f$ are
part of the non-oriented cycle (see Step $4$).
\end{minipage}
\end{minipage}

\bigskip

\begin{minipage}{0.96\linewidth}
\begin{minipage}{0.13\linewidth}
\
\end{minipage}%
\begin{minipage}[t]{0.87\linewidth}
In this procedure, the parameters $r_1, \ r_2, \ s_1$ and $s_2$ are left unchanged
since we are not changing the number of arrows and the number of oriented $3$-cycles
which are attached to an oriented $3$-cycle sharing one arrow with the non-oriented cycle.
\end{minipage}
\end{minipage}

\bigskip

\begin{minipage}{0.96\linewidth}
\begin{minipage}{0.13\linewidth}
Step $2$:
\end{minipage}%
\begin{minipage}[t]{0.87\linewidth}
We move all oriented $3$-cycles onto the non-oriented cycle.
\end{minipage}
\end{minipage}

\bigskip

\begin{minipage}{0.96\linewidth}
\begin{minipage}{0.13\linewidth}
Method:
\end{minipage}%
\begin{minipage}[t]{0.87\linewidth}
Let $C$ be an oriented $3$-cycle which shares one vertex $z_{\alpha}$ with an oriented
$3$-cycle $C_{\alpha}$ sharing an arrow $\alpha$ with the non-oriented cycle.
Then we mutate at the vertex $z_{\alpha}$:
\end{minipage}
\end{minipage}

\bigskip

\begin{minipage}{0.96\linewidth}
\begin{minipage}{0.13\linewidth}
\ 
\end{minipage}%
\begin{minipage}[t]{0.87\linewidth}
\begin{minipage}{0.32\linewidth}
\begin{center}
\includegraphics[scale=0.8]{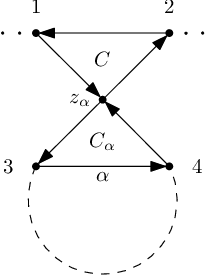}
\end{center}
\end{minipage}
\hfill
\begin{minipage}{0.1\linewidth}
$\substack{\textrm{mutation } \\ \leadsto \\ \textrm{at } z_{\alpha}}$
\end{minipage}
\hfill
\begin{minipage}{0.45\linewidth}
\begin{center}
\includegraphics[scale=0.8]{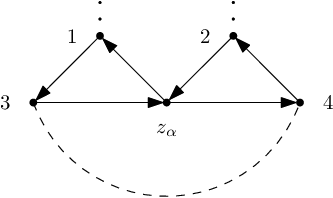}
\end{center}
\end{minipage}
\end{minipage}
\end{minipage}

\bigskip

\begin{minipage}{0.96\linewidth}
\begin{minipage}{0.13\linewidth}
\ 
\end{minipage}%
\begin{minipage}[t]{0.87\linewidth}
Hence, both of the oriented $3$-cycles share one arrow with the non-oriented
cycle and these arrows are oriented as $\alpha$ was before.
Thus, the parameters $r_1, \ r_2, \ s_1$ and $s_2$ are left unchanged.
Furthermore, the length of the non-oriented cycle increases by $1$.
By iterated mutation of that kind, we produce a quiver $Q^*$, where all the oriented
$3$-cycles share an arrow with the non-oriented cycle.
\end{minipage}
\end{minipage}

\bigskip

\begin{minipage}{0.96\linewidth}
\begin{minipage}{0.13\linewidth}
Step $3$:
\end{minipage}%
\begin{minipage}[t]{0.87\linewidth}
We move all arrows onto the non-oriented cycle.
\end{minipage}
\end{minipage}

\bigskip

\begin{minipage}{0.96\linewidth}
\begin{minipage}{0.13\linewidth}
Method:
\end{minipage}%
\begin{minipage}[t]{0.87\linewidth}
This is a similar process as in Step $2$:
Let $C_{\alpha}$ be an oriented $3$-cycle which shares an arrow $\alpha$ with
the non-oriented cycle.
All arrows attached to $C_{\alpha}$ can be moved into the non-oriented cycle
by iteratively mutating at vertex $z_{\alpha}$. After mutating, all these
arrows have the same orientation as $\alpha$ in the non-oriented cycle.
Thus, the parameters $r_1, \ r_2, \ s_1$ and $s_2$ are left unchanged.
\end{minipage}
\end{minipage}

\bigskip

\begin{minipage}{0.96\linewidth}
\begin{minipage}{0.13\linewidth}
Step $4$:
\end{minipage}%
\begin{minipage}[t]{0.87\linewidth}
Move oriented $3$-cycles along the non-oriented cycle.
\end{minipage}
\end{minipage}

\bigskip

\begin{minipage}{0.96\linewidth}
\begin{minipage}{0.13\linewidth}
Method:
\end{minipage}%
\begin{minipage}[t]{0.87\linewidth}
First, we number all oriented $3$-cycles by $C_1, \dots, C_{r_2+s_2}$ in such
a way that $C_{i+1}$ follows $C_i$ in the anti-clockwise direction.
As in Step $1$, we can move an oriented $3$-cycle $C_i$ towards $C_{i+1}$,
without changing the orientation of the arrows, i.e., without changing the
parameters $r_1, \ r_2, \ s_1$ and $s_2$.
\end{minipage}
\end{minipage}

\bigskip

\begin{minipage}{0.96\linewidth}
\begin{minipage}{0.13\linewidth}
\
\end{minipage}%
\begin{minipage}[t]{0.87\linewidth}
Note that if the non-oriented cycle includes the vertex $a$ in the pictures
of Step $1$, the arrows between the two cycles move to the top of $C_{i+1}$,
i.e., they are no longer part of the non-oriented cycle.
However, we can reverse their directions by mutating at the new sinks or sources
and insert these arrows into the non-oriented cycle between $C_{i+1}$ and $C_{i+2}$
by mutations like in Step $3$ (if $C_{i+2}$ exists).
\end{minipage}
\end{minipage}

\bigskip

\begin{minipage}{0.96\linewidth}
\begin{minipage}{0.13\linewidth}
\ 
\end{minipage}%
\begin{minipage}[t]{0.87\linewidth}
Doing this iteratively, we produce a quiver $Q^*$ as in Figure~\ref{form_step4},
with $r_1 + s_1$ arrows which are not part of any oriented $3$-cycle and
$r_2 + s_2$ oriented $3$-cycles sharing one arrow with the non-oriented cycle.
\end{minipage}
\end{minipage}

\begin{figure}
\centering
\includegraphics[scale=0.7]{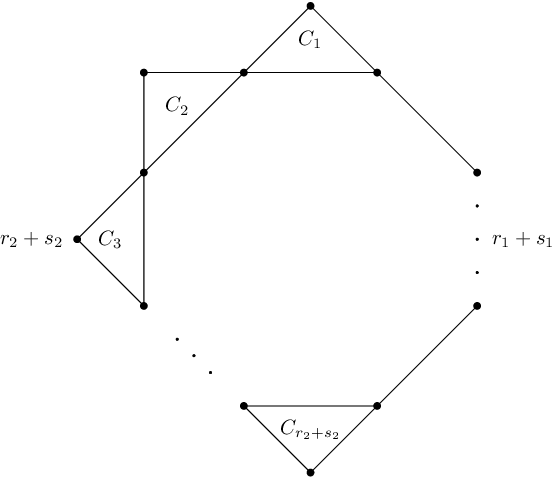}
\caption{Normal form of Step $4$.}
\label{form_step4}
\end{figure}

\bigskip

\begin{minipage}{0.96\linewidth}
\begin{minipage}{0.13\linewidth}
Step $5$:
\end{minipage}%
\begin{minipage}[t]{0.87\linewidth}
Changing orientation on the non-oriented cycle to the orientation of Figure~\ref{normal_form}.
\end{minipage}
\end{minipage}

\bigskip

\begin{minipage}{0.96\linewidth}
\begin{minipage}{0.13\linewidth}
Method:
\end{minipage}%
\begin{minipage}[t]{0.87\linewidth}
The part of the non-oriented cycle without oriented $3$-cycles can be
moved to the desired orientation of Figure~\ref{normal_form} via sink/source mutations,
without mutating at the `end' vertices which are attached to oriented $3$-cycles.
Thus, the parameters $r_1$ and $s_1$ are left unchanged.
\end{minipage}
\end{minipage}

\bigskip

\begin{minipage}{0.96\linewidth}
\begin{minipage}{0.13\linewidth}
\ 
\end{minipage}%
\begin{minipage}[t]{0.87\linewidth}
Each oriented $3$-cycle shares one arrow with the non-oriented cycle.
If all of these arrows are oriented in the same direction, the quiver is in
the required form. Thus, we can assume that there are at least two arrows of
two oriented $3$-cycles $C_i$ and $C_{i+1}$ which are oriented in opposite directions.
If we mutate at the connecting vertex of $C_i$ and $C_{i+1}$, the directions
of these arrows are changed:
\end{minipage}
\end{minipage}

\bigskip

\begin{minipage}{0.96\linewidth}
\begin{minipage}{0.13\linewidth}
\ 
\end{minipage}%
\begin{minipage}[c]{0.87\linewidth}
\begin{minipage}{0.43\linewidth}
\begin{center}
\includegraphics[scale=0.8]{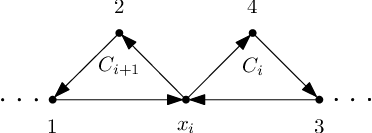}
\end{center}
\end{minipage}
\hfill
\begin{minipage}{0.1\linewidth}
$\substack{\textrm{mutation} \\ \leadsto \\ \textrm{at } x_i}$
\end{minipage}
\hfill
\begin{minipage}{0.43\linewidth}
\begin{center}
\includegraphics[scale=0.8]{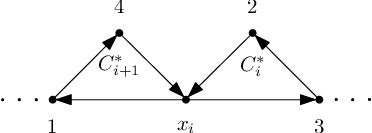}
\end{center}
\end{minipage}
\end{minipage}
\end{minipage}

\bigskip

\begin{minipage}{0.96\linewidth}
\begin{minipage}{0.13\linewidth}
\ 
\end{minipage}%
\begin{minipage}[c]{0.87\linewidth}
\begin{minipage}{0.43\linewidth}
\begin{center}
\includegraphics[scale=0.8]{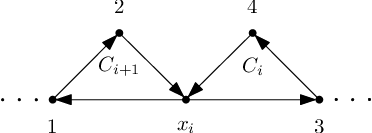}
\end{center}
\end{minipage}
\hfill
\begin{minipage}{0.1\linewidth}
$\substack{\textrm{mutation} \\ \leadsto \\ \textrm{at } x_i}$
\end{minipage}
\hfill
\begin{minipage}{0.43\linewidth}
\begin{center}
\includegraphics[scale=0.8]{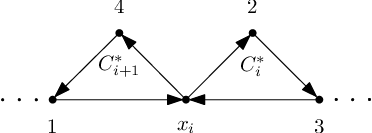}
\end{center}
\end{minipage}
\end{minipage}
\end{minipage}

\bigskip

\begin{minipage}{0.96\linewidth}
\begin{minipage}{0.13\linewidth}
\ 
\end{minipage}%
\begin{minipage}[t]{0.87\linewidth}
Hence, these mutations act like sink/source mutations at the non-oriented
cycle and the parameters $r_2$ and $s_2$ are left unchanged.
Thus, we can mutate at such connecting vertices as in the part without
oriented $3$-cycles to reach the desired orientation of Figure~\ref{normal_form}.
\end{minipage}
\end{minipage}

\end{proof}

\begin{thm} \label{one_part_of_mutation_classes}
Let $Q \in \mQ_n$ with parameters $r_1, \ r_2, \ s_1$ and $s_2$.
Then $Q$ is mutation equivalent to a non-oriented cycle of length $n+1$ with
parameters $r=r_1+2r_2$ and $s=s_1+2s_2$.
\end{thm}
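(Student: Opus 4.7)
The plan is to first reduce $Q$ to normal form via Lemma~\ref{mutate_to_normalform}, and then perform one further mutation per oriented $3$-cycle that opens the cycle into two arrows of the same orientation on the non-oriented cycle.

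By Lemma~\ref{mutate_to_normalform}, $Q$ is mutation equivalent to a quiver $Q_0$ in normal form with parameters $r_1, r_2, s_1, s_2$. In $Q_0$ the non-oriented cycle carries $r_1$ anti-clockwise and $s_1$ clockwise arrows outside any $3$-cycle, together with $r_2 + s_2$ oriented $3$-cycles, each sharing exactly one arrow $\alpha$ with the non-oriented cycle (anti-clockwise in $r_2$ cases, clockwise in $s_2$ cases) and having one external vertex $z_\alpha$ not on the cycle.

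The key step is the following local mutation. Fix such a $3$-cycle with arrows $a \xrightarrow{\alpha} b$, $b \to z_\alpha$, $z_\alpha \to a$. Applying the mutation rule at $k = z_\alpha$: with $i = b$, $j = a$ we have $r = s = 1$ and $t = 1$, so the arrow $a \to b$ disappears ($t-rs = 0$), while the arrows at $z_\alpha$ are reversed to $a \to z_\alpha$ and $z_\alpha \to b$. The $3$-cycle is thus replaced by a path $a \to z_\alpha \to b$, both of whose arrows carry the same orientation as $\alpha$, and $z_\alpha$ is absorbed into the non-oriented cycle. Since the external vertices $z_\alpha$ for different $3$-cycles are distinct and each mutation only affects arrows incident to its own $z_\alpha$, we may perform these mutations independently for all $r_2 + s_2$ cycles.

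After this sequence of mutations, no oriented $3$-cycle remains, and we are left with a single non-oriented cycle. Each of the $r_2$ anti-clockwise $3$-cycles contributes two new anti-clockwise arrows, and each of the $s_2$ clockwise $3$-cycles contributes two new clockwise arrows. Counting together with the $r_1$ and $s_1$ arrows already on the cycle gives $r := r_1 + 2r_2$ anti-clockwise and $s := s_1 + 2s_2$ clockwise arrows, and the total length $r_1 + s_1 + 2(r_2+s_2)$ agrees with the vertex count $n+1$ as required. The only point requiring any care is the local mutation computation; the rest is bookkeeping since the $3$-cycles are mutually disjoint outside the non-oriented cycle.
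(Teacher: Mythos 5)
Your proof is correct and follows essentially the same route as the paper: reduce to normal form via Lemma~\ref{mutate_to_normalform}, then mutate at the external vertex of each oriented $3$-cycle to replace its shared arrow by two arrows of the same orientation, yielding $r = r_1+2r_2$ and $s = s_1+2s_2$. The only detail the paper makes explicit that you gloss over is that $r$ and $s$ are both nonzero (because the non-oriented cycle of $Q$ forces $r_1+r_2 \geq 1$ and $s_1+s_2 \geq 1$), so the resulting cycle is genuinely non-oriented rather than of type $D_{n+1}$.
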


\begin{proof}
We can assume that $Q$ is in normal form (see Lemma \ref{mutate_to_normalform})
and we label the vertices $z_{\alpha}$ as follows:
\begin{center}
\includegraphics[scale=0.75]{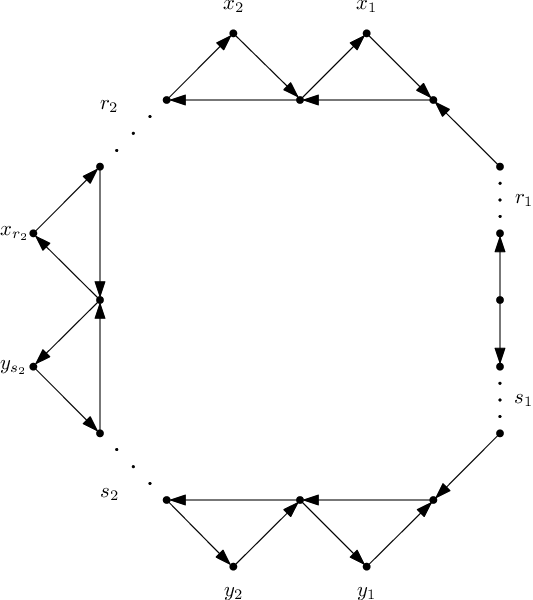}
\end{center}

Mutation at the vertex $x_i$ of an oriented $3$-cycle
\includegraphics[scale=0.9]{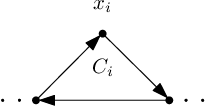}
leads to two arrows of the following form
\includegraphics[scale=0.9]{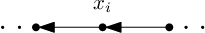} \ .

\smallskip

Thus, after mutating at all the $x_i$, the parameter $r_2$ is zero and
we have a new parameter $r = r_1+2r_2$. Similarly, we get $s = s_1+2s_2$.
Hence, mutating at all the $x_i$ and $y_i$ leads to a quiver with underlying
graph $\tAn$ as follows:
\begin{center}
\includegraphics[scale=0.835]{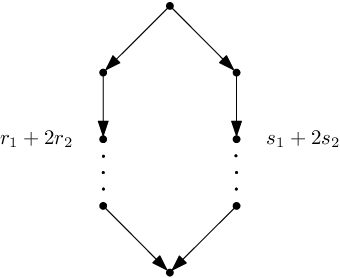}
\end{center}
Since there is a non-oriented cycle in every $Q \in \mQ_n$, these parameters
$r$ and $s$ are non-zero.
Thus, the cycle above is also non-oriented.
Hence, $Q$ is mutation equivalent to some quiver of type $\tAn$ with parameters
$r = r_1+2r_2$ and $s = s_1+2s_2$.
\end{proof}

\begin{cor} \label{corollary}
Let $Q_1, Q_2 \in \mQ_n$ with parameters $r_1, \ r_2, \ s_1$ and $s_2$,
respectively $\tilde{r}_1, \ \tilde{r}_2, \ \tilde{s}_1$ and $\tilde{s}_2$.
If $r_1+2r_2 = \tilde{r}_1 + 2\tilde{r}_2$ and $s_1+2s_2 = \tilde{s}_1 + 2\tilde{s}_2$
(or vice versa), then $Q_1$ is mutation equivalent to $Q_2$.
\end{cor}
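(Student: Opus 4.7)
The plan is to obtain the corollary as a direct consequence of the two preceding results, Theorem \ref{one_part_of_mutation_classes} and Lemma \ref{mutation_equivalence}, together with the transitivity of mutation equivalence. Since both $Q_1$ and $Q_2$ lie in $\mQ_n$, they have $n+1$ vertices, and any mutation preserves the number of vertices, so everything remains on $n+1$ vertices throughout.

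First I would apply Theorem \ref{one_part_of_mutation_classes} to $Q_1$ to produce a non-oriented cycle $C_1$ of length $n+1$ having $r = r_1 + 2r_2$ arrows in one direction and $s = s_1 + 2s_2$ arrows in the other. Applying the theorem again to $Q_2$ yields a non-oriented cycle $C_2$ of length $n+1$ with $\tilde{r} = \tilde{r}_1 + 2\tilde{r}_2$ arrows in one direction and $\tilde{s} = \tilde{s}_1 + 2\tilde{s}_2$ arrows in the other. By hypothesis the unordered pairs $\{r,s\}$ and $\{\tilde{r}, \tilde{s}\}$ coincide, so Lemma \ref{mutation_equivalence} of Fomin, Shapiro and Thurston guarantees that $C_1$ and $C_2$ are mutation equivalent. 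Chaining the three mutation sequences $Q_1 \leadsto C_1 \leadsto C_2 \leadsto Q_2$ (the last one is the reverse of the mutation sequence from $Q_2$ to $C_2$, which exists because quiver mutation is an involution) then gives $Q_1$ mutation equivalent to $Q_2$.

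There is essentially no obstacle here, since all the technical work has already been done in Theorem \ref{one_part_of_mutation_classes} (which invokes the normal form from Lemma \ref{mutate_to_normalform}) and in Lemma \ref{mutation_equivalence}. The only point worth checking explicitly is the ``or vice versa'' clause: the two reductions are symmetric in the choice of which direction on the non-oriented cycle is called clockwise, so swapping the roles of $(r_1,r_2)$ with $(\tilde{s}_1, \tilde{s}_2)$ and $(s_1,s_2)$ with $(\tilde{r}_1,\tilde{r}_2)$ merely corresponds to applying Theorem \ref{one_part_of_mutation_classes} to $Q_2$ with the opposite orientation convention, and still produces a cycle with the same unordered pair of direction counts, so Lemma \ref{mutation_equivalence} applies exactly as before.
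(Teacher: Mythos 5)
Your proof is correct and follows exactly the route the paper intends: reduce both quivers to non-oriented cycles via Theorem \ref{one_part_of_mutation_classes}, then apply Lemma \ref{mutation_equivalence} and the symmetry and transitivity of mutation equivalence. The paper leaves this corollary without an explicit proof precisely because it is this immediate consequence, so your argument matches the paper's.
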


\begin{thm} \label{description_mutation_classes}
Let $Q_1, Q_2 \in \mQ_n$ with parameters $r_1, \ r_2, \ s_1$ and $s_2$,
respectively $\tilde{r}_1, \ \tilde{r}_2, \ \tilde{s}_1$ and $\tilde{s}_2$.
Then $Q_1$ is mutation equivalent to $Q_2$ if and only if $r_1+2r_2 = \tilde{r}_1 + 2\tilde{r}_2$
and $s_1+2s_2 = \tilde{s}_1 + 2\tilde{s}_2$ (or $r_1+2r_2 = \tilde{s}_1 + 2\tilde{s}_2$
and $s_1+2s_2 = \tilde{r}_1 + 2\tilde{r}_2$).
\end{thm}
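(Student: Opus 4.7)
The plan is to reduce this theorem to results already proved in the excerpt. The forward implication is essentially a restatement of Corollary \ref{corollary}, so nothing new is needed there. For the converse I would combine Theorem \ref{one_part_of_mutation_classes} with the Fomin--Shapiro--Thurston Lemma \ref{mutation_equivalence}.

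More precisely, suppose $Q_1$ and $Q_2$ are mutation equivalent quivers in $\mQ_n$. Applying Theorem \ref{one_part_of_mutation_classes} to each of them separately, I obtain a non-oriented cycle $C_1$ of length $n+1$ with $r=r_1+2r_2$ arrows in one direction and $s=s_1+2s_2$ arrows in the other direction, mutation equivalent to $Q_1$; and analogously a non-oriented cycle $C_2$ of length $n+1$ with parameters $\tilde{r}=\tilde{r}_1+2\tilde{r}_2$ and $\tilde{s}=\tilde{s}_1+2\tilde{s}_2$, mutation equivalent to $Q_2$. Since mutation equivalence is an equivalence relation, $C_1$ and $C_2$ are then mutation equivalent non-oriented cycles of the same length. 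Lemma \ref{mutation_equivalence} then forces the unordered pairs $\{r,s\}$ and $\{\tilde{r},\tilde{s}\}$ to coincide, which is exactly the conclusion of the theorem (the two cases in the statement corresponding to whether the pairs match in the same order or reversed).

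The only subtle point I would double-check is that the two cycles $C_1$ and $C_2$ produced by Theorem \ref{one_part_of_mutation_classes} are genuinely non-oriented, so that Lemma \ref{mutation_equivalence} applies; this was already noted inside the proof of Theorem \ref{one_part_of_mutation_classes} via the observation that every $Q\in\mQ_n$ contains a non-oriented cycle, forcing both $r$ and $s$ to be positive. There is no real obstacle here since all the structural work, namely the closure of $\mQ_n$ under mutation, the existence of a normal form with prescribed parameters, and the reduction to a non-oriented cycle with parameters $r_1+2r_2$ and $s_1+2s_2$, has already been carried out in the preceding lemmas and theorems; the present statement is then a clean consequence obtained by transporting the equivalence across these reductions.
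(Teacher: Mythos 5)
Your proposal is correct and follows exactly the paper's own argument: the paper derives the ``if'' direction from Corollary \ref{corollary} and explicitly notes that the ``only if'' direction follows from Theorem \ref{one_part_of_mutation_classes} together with Lemma \ref{mutation_equivalence}, which is precisely your reduction via the two non-oriented cycles $C_1$ and $C_2$. Your extra check that $r,s>0$ (so the cycles are genuinely non-oriented) is also the same observation made inside the proof of Theorem \ref{one_part_of_mutation_classes}.
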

Note that the only-if-part follows from Theorem \ref{one_part_of_mutation_classes}
and Lemma \ref{mutation_equivalence}.


\section{Cluster tilted algebras of type $\tAn$}

In general, cluster tilted algebras arise as endomorphism algebras of cluster-tilting
objects in a cluster category \cite{BMR}.
Since a cluster tilted algebra $A$ of type $\tAn$ is finite dimensional over an
algebraically closed field $K$, 
there exists a quiver $Q$ which is in the mutation classes of $\tAn$ \cite{BMR2}
and an admissible ideal $I$ of the path algebra $KQ$ of $Q$ such that $A \cong KQ/I$.
A non-zero linear combination $k_1\alpha_1 + \dots + k_m\alpha_m, \ k_i \in K\backslash\{0\},$
of paths $\alpha_i$ of length at least two, with the same starting point and the same end point,
is called a {\em relation} in $Q$.
If $m=1$, we call such a relation a {\em zero-relation}.
Any admissible ideal of $KQ$ is generated by a finite set of relations in $Q$.

From \cite{ABCP} and \cite{Assem-Redondo} we know that a cluster tilted algebra $A$ of type
$\tAn$ is gentle. Thus, recall the definition of gentle algebras:

\begin{dfn}
We call $A=KQ/I$ a {\em special biserial algebra} if the following properties hold:
\begin{itemize}
\item[1)] Each vertex of $Q$ is the starting point of at most two arrows and the end point
of at most two arrows.
\item[2)] For each arrow $\alpha$ in $Q$ there is at most one arrow $\beta$ such that
$\alpha\beta \notin I$, and at most one arrow $\gamma$ such that $\gamma\alpha \notin I$.
\end{itemize}
$A$ is {\em gentle} if moreover:
\begin{itemize}
\item[3)] The ideal $I$ is generated by paths of length $2$.
\item[4)] For each arrow $\alpha$ in $Q$ there is at most one arrow $\beta^{\prime}$
with $t(\alpha)=s(\beta^{\prime})$ such that
$\beta^{\prime}\alpha \in I$, and there is at most one arrow $\gamma^{\prime}$ with
$t(\gamma^{\prime}) = s(\alpha)$ such that $\alpha\gamma^{\prime} \in I$.
\end{itemize}
\end{dfn}

Furthermore, all relations in a cluster tilted algebra $A$ of type $\tAn$ occur in
the oriented $3$-cycles, i.e., in cycles of the form
\begin{center}
\includegraphics[scale=0.9]{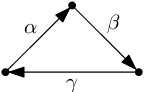}
\end{center}
with (zero-)relations $\alpha\gamma, \ \beta\alpha$ and $\gamma\beta$ (see \cite{ABCP}
and \cite{Assem-Redondo}).

\begin{rmk}
According to our convention in Definition \ref{description_mutation_class} there are
only three (zero-)relations in the following quiver
\begin{center}
\includegraphics[scale=0.9]{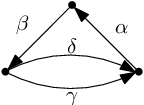}
\end{center}
and here, these are $\alpha\delta, \ \beta\alpha$ and $\delta\beta$.
\end{rmk}

\medskip

For the next section, we need the notion of Cartan matrices of an algebra $A$
(for example see \cite{Holm}).
Let $K$ be a field and $A=KQ/I$. Since $\sum_{i \in Q_0} e_i + I$ is the unit element in
$A$ we get $A=A \cdot 1 = \bigoplus_{i \in Q_0}Ae_i$, hence the (left) $A$-modules
$P_i:= Ae_i$ are the indecomposable projective $A$-modules. The {\em Cartan matrix}
$C=(c_{ij})$ of $A$ is a $|Q_0| \times |Q_0|$-matrix defined by setting
$c_{ij} = \dim_K \mathrm{Hom}_A (P_j,P_i)$. Any homomorphism $\varphi: Ae_j \rightarrow
Ae_i$ of left $A$-modules is uniquely determined by $\varphi(e_j) \in e_jAe_i$, the
$K$-vector space generated by all paths in $Q$ from vertex $i$ to vertex $j$ which
are non-zero in $A$. In particular, we have $c_{ij}=\dim_K e_jAe_i$.

That means, computing entries of the Cartan matrix for $A$ reduces to counting paths
in $Q$ which are non-zero in $A$.


\section{Derived equivalence classification of cluster tilted algebras of type $\tAn$}

First, we briefly review the fundamental results on derived equivalences. 
For a $K$-algebra $A$ the bounded derived category of $A$-modules 
is denoted by $D^b(A)$. Recall that two algebras $A,B$ are called
derived equivalent if $D^b(A)$ and $D^b(B)$ are equivalent
as triangulated categories. By a theorem of Rickard
\cite{Rickard} derived equivalences can be found using the concept of tilting complexes.

\begin{dfn} \label{tilting_complex}
A tilting complex $T$ over $A$ is a bounded complex
of finitely generated projective $A$-modules satisfying the
following conditions:
\begin{itemize}
\item[\rn1)] $\Hom_{D^b(A)}(T,T[i])=0$ for all $i\neq 0$, where $[.]$
denotes the shift functor in $D^b(A)$;
\item[\rn2)] the category $\add(T)$ (i.e. the full subcategory
consisting of direct summands of direct sums of $T$) generates
the homotopy category $K^b(P_A)$ of projective $A$-modules 
as a triangulated category.
\end{itemize}
\end{dfn}

We can now formulate Rickard's celebrated result.

\begin{thm}
[Rickard \cite{Rickard}] \label{Rickard}
Two algebras $A$ and $B$ are derived equivalent if and only if 
there exists a tilting complex $T$ for $A$ such that the
endomorphism algebra $\End_{D^b(A)}(T)\cong B$.
\end{thm}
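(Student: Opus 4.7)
The plan is to prove the two implications separately.

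For the 'only if' direction, suppose $F \colon D^b(A) \to D^b(B)$ is a triangle equivalence, and set $T := F^{-1}(B)$ with $B$ viewed as a complex concentrated in degree zero. Being a tilting complex is intrinsic to the triangulated structure: representability inside $K^b(P_A)$ corresponds to compactness in the unbounded derived category, the $\Hom$-vanishing of condition (\rn1) is manifestly preserved by $F$, and the generation condition (\rn2) is a purely triangulated statement that transports from the tautological generation of $K^b(P_B)$ by $B$. Hence $T$ satisfies all three conditions, and $\End_{D^b(A)}(T) \cong \End_{D^b(B)}(B) \cong B$.

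For the 'if' direction, given a tilting complex $T$ with $\End_{D^b(A)}(T) \cong B$, the strategy is to construct an honest triangle functor realising the equivalence. One clean approach is to form the derived Hom functor $G := \mathrm{RHom}_A(T,-) \colon D^b(A) \to D^b(B)$ (with the usual opposite-ring convention understood). By construction $G(T) \cong B$, and condition (\rn1) upgrades this to the statement that $G$ is fully faithful on $\add(T)$. A standard triangulated induction, invoking the five-lemma on distinguished triangles, extends fully faithfulness successively to $K^b(\add T)$, then to all of $K^b(P_A)$ using condition (\rn2), and finally to $D^b(A)$ via bounded projective resolutions; essential surjectivity is then automatic because the image is a triangulated subcategory of $D^b(B)$ containing the generator $B$.

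The main obstacle is exactly the bootstrap from $\add(T)$ to the full triangulated hull. In Rickard's original argument this is handled by an intricate induction together with a Morita-type recognition theorem for derived categories; Keller's later reformulation uses a DG-enhancement to produce a two-sided tilting complex that realises the equivalence at the level of underlying DG-categories, bypassing the need to check higher coherence by hand. Either way, the substantive content is that the $\Hom$-vanishing (\rn1) together with the generation (\rn2) is exactly what is needed to promote the tautological identification $G(T) \cong B$ to a global triangle equivalence $D^b(A) \simeq D^b(B)$.
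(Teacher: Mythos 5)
This statement is Rickard's Morita theorem for derived categories, which the paper imports verbatim with a citation and does not prove; there is therefore no in-paper argument to compare yours against, and your submission should be judged as a standalone proof sketch of a deep external result.

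As such, it has a genuine gap in the ``if'' direction. First, the substantive content --- the bootstrap from full faithfulness on $\add(T)$ to an equivalence of triangulated categories --- is not proved but delegated to Rickard's induction or Keller's DG enhancement; naming where the difficulty lives is not the same as resolving it. Second, and more concretely, two of your bridging claims are false as stated for a general finite-dimensional algebra. You pass ``to $D^b(A)$ via bounded projective resolutions,'' but bounded projective resolutions exist only when $\mathrm{gldim}\,A<\infty$; in general one must work with $K^{-,b}(P_A)$, the homotopy category of bounded-above complexes of projectives with bounded cohomology, and extend the equivalence from $K^b(P_A)$ to $K^{-,b}(P_A)\simeq D^b(A)$ by a separate limiting argument. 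Likewise, essential surjectivity is \emph{not} ``automatic because the image is a triangulated subcategory of $D^b(B)$ containing the generator $B$'': the thick subcategory of $D^b(B)$ generated by $B$ is exactly $K^b(P_B)$, which is a proper subcategory of $D^b(B)$ whenever $B$ has infinite global dimension --- precisely the case of interest for cluster tilted algebras of type $\tilde{A}_n$, which are gentle algebras with oriented $3$-cycles carrying full relations. So the argument as written only yields $K^b(P_A)\simeq K^b(P_B)$, not the claimed equivalence of bounded derived categories. The ``only if'' direction is essentially correct in outline, modulo the standard point that one must characterize $K^b(P_A)$ intrinsically inside $D^b(A)$ (e.g.\ as the homologically finite objects) to know that $F^{-1}(B)$ is a perfect complex.
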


For calculating the endomorphism algebra $\End_{D^b(A)}(T)$ we can use the
following alternating sum formula which gives a general method for computing 
the Cartan matrix of an endomorphism algebra of a tilting complex from 
the Cartan matrix of the algebra $A$.

\begin{prop}
[Happel \cite{Happel}] \label{Happel}
For an algebra $A$ let $Q = (Q^r)_{r \in \Z}$ and 
$R = (R^s)_{s \in \Z}$ be bounded complexes of projective $A$-modules. Then
$$\sum\limits_i (-1)^i \dim\mathrm{Hom}_{D^b(A)}(Q,R[i])=
\sum\limits_{r,s} (-1)^{r-s}\dim\mathrm{Hom}_A(Q^r,R^s).$$
In particular, if $Q$ and $R$ are direct summands of the same tilting complex
then $$\dim\mathrm{Hom}_{D^b(A)}(Q,R) = \sum\limits_{r,s} 
(-1)^{r-s} \dim\mathrm{Hom}_A(Q^r,R^s).$$
\end{prop}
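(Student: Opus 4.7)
The plan is to reduce the left-hand side to an Euler-characteristic identity on a total $\Hom$-complex of vector spaces and then match terms with the right-hand side. Since $Q$ is a bounded complex of projective $A$-modules, morphisms from $Q$ in $D^b(A)$ coincide with morphisms in the homotopy category $K^b(A)$, so $\Hom_{D^b(A)}(Q,R[i]) \cong \Hom_{K^b(A)}(Q,R[i])$ for every $i\in\Z$. This replaces every term on the left by a purely homotopy-category quantity.

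Next I would form the total $\Hom$-complex $\Hom^{\bullet}(Q,R)$ of $K$-vector spaces whose degree-$n$ component is $\bigoplus_{r\in\Z}\Hom_A(Q^r,R^{r+n})$, equipped with the standard differential $d\varphi = d_R\varphi - (-1)^n\varphi\, d_Q$. Because $Q$ and $R$ are bounded and each $Q^r$ is finitely generated projective, every component is a finite-dimensional vector space and only finitely many are non-zero. A standard identification (chain maps modulo null-homotopies are precisely the $0$-th cohomology, and shifting $R$ by $i$ reads off $H^i$) gives $H^i(\Hom^{\bullet}(Q,R)) \cong \Hom_{K^b(A)}(Q,R[i])$. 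Using that the Euler characteristic of a bounded complex of finite-dimensional vector spaces agrees with that of its cohomology, I obtain
\[
\sum_i (-1)^i\dim\Hom_{D^b(A)}(Q,R[i]) \;=\; \sum_n (-1)^n \sum_r \dim\Hom_A(Q^r,R^{r+n}),
\]
and the re-indexing $s=r+n$ (so $(-1)^n=(-1)^{s-r}=(-1)^{r-s}$) rewrites the right-hand side as $\sum_{r,s}(-1)^{r-s}\dim\Hom_A(Q^r,R^s)$, which is the first claimed formula.

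For the ``in particular'' assertion, assume $Q$ and $R$ are direct summands of a single tilting complex $T$. By Definition \ref{tilting_complex}\,\rn1), $\Hom_{D^b(A)}(T,T[i])=0$ for $i\neq 0$, and this vanishing passes to all direct summands; hence $\Hom_{D^b(A)}(Q,R[i])=0$ for $i\neq 0$, and the alternating sum on the left collapses to the single term $\dim\Hom_{D^b(A)}(Q,R)$. The whole argument is really just a bookkeeping exercise; the only mildly technical ingredient is the standard identification $H^i(\Hom^{\bullet}(Q,R)) \cong \Hom_{K^b(A)}(Q,R[i])$, and I anticipate no genuine obstacle.
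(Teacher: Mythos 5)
Your argument is correct and is essentially the standard proof of Happel's formula (the one given in \cite{Happel}): identify $\Hom_{D^b(A)}(Q,R[i])$ with $\Hom_{K^b(A)}(Q,R[i])$, realize these as the cohomology of the total $\Hom$-complex, and invoke the invariance of the Euler characteristic of a bounded complex of finite-dimensional vector spaces under passage to cohomology, with the tilting-complex condition \rn1) collapsing the alternating sum for the second assertion. The paper itself offers no proof, citing Happel, so there is nothing to contrast; your write-up fills that gap correctly.
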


\begin{lem} \label{derived_equivalences}
Let $A = KQ/I$ be a cluster tilted algebra of type $\tAn$.
Let $r_1, \ r_2, \ s_1$ and $s_2$ be the parameters of $Q$ which are defined in
\ref{parameters}. Then $A$ is derived equivalent to a cluster tilted algebra corresponding
to a quiver in normal form as in Figure~\ref{normal_form}.
\end{lem}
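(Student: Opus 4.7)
The plan is to show that every elementary mutation used in the proof of Lemma~\ref{mutate_to_normalform} induces a derived equivalence between the corresponding cluster tilted algebras. Since derived equivalence is transitive and since the sequence of mutations constructed there takes $Q$ to a quiver in normal form without changing the parameters $r_1,r_2,s_1,s_2$, composing the intermediate derived equivalences will yield the conclusion.

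Inspecting Steps~1--5 of Lemma~\ref{mutate_to_normalform}, the elementary moves fall into a short list of local types: (a) mutation at a sink or source; (b) mutation at a connecting vertex $z_{\alpha}$ between two oriented $3$-cycles or between a $3$-cycle and a type-$A$ branch; (c) mutation at an interior vertex of the non-oriented cycle lying on one or two oriented $3$-cycles. For (a), the APR/Brenner--Butler reflection is a classical tilting $A$-module whose endomorphism algebra is the cluster tilted algebra of the reflected quiver, so the induced derived equivalence is immediate. For (b) and (c), I would apply the Okuyama--Rickard construction at the mutation vertex $k$: set
\[
T = \bigoplus_{i \neq k} P_i \,\oplus\, T_k, \qquad T_k \colon \;\; 0 \to P_k \xrightarrow{\,f\,} \bigoplus_{\alpha\colon k \to j} P_j \to 0,
\]
with $P_k$ in degree $-1$ and $f$ induced by left multiplication by the arrows out of $k$ (using the dual complex built from arrows into $k$ in the mirror situations).

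To verify that $T$ is a tilting complex in the sense of Definition~\ref{tilting_complex}, I would check $\Hom_{D^b(A)}(T,T[i]) = 0$ for $i \neq 0$ by a local computation of minimal projective resolutions near $k$; this is tractable because $A$ is gentle with relations concentrated on oriented $3$-cycles, so all relevant paths and compositions are controlled by the local shape of $Q$ around $k$. Generation of $K^b(P_A)$ by $\add(T)$ follows from a standard triangle exhibiting $P_k$ as a two-term extension of summands of $T$ up to shift. To identify $\End_{D^b(A)}(T)$ with $KQ^{\ast}/I^{\ast}$ for the mutated quiver $Q^{\ast}$, I would compute the Cartan matrix via Happel's alternating-sum formula (Proposition~\ref{Happel}), confirm that the arrows agree with Fomin--Zelevinsky mutation, and finally construct explicit morphisms between the summands of $T$ representing the arrows of $Q^{\ast}$ and check that the gentle $3$-cycle relations of $I^{\ast}$ hold by direct composition in $D^b(A)$.

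The main obstacle is the case-by-case verification: one has to run through each local configuration appearing in Steps~1--5 (including the special cases where the non-oriented cycle degenerates to a double arrow, and the situations where arrows migrate on or off the non-oriented cycle), choose the correct variant of the Okuyama--Rickard complex, and confirm that the resulting endomorphism algebra is precisely the cluster tilted algebra of the mutated quiver, with the gentle relations concentrated on the new oriented $3$-cycles. Since every mutation is local around the vertex $k$, each verification reduces to a finite, mechanical computation, but the bookkeeping of the relations on newly created or destroyed $3$-cycles is the delicate part and is where the gentle structure of cluster tilted algebras of type $\tAn$ has to be used most carefully.
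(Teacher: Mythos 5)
Your proposal follows essentially the same route as the paper: reduce to the elementary mutations of Lemma~\ref{mutate_to_normalform} (the paper treats only Steps~1 and~2 explicitly, as the rest repeat these local configurations) and realize each one by a two-term tilting complex concentrated at the mutation vertex, identifying the endomorphism ring via Happel's Cartan-matrix formula, explicit morphisms for the arrows of the mutated quiver, and a check of the gentle relations up to homotopy. The only caveat is a direction convention: with the paper's left-module conventions the nontrivial summand is $0\to P_k\to\bigoplus P_j\to 0$ with the map given by the arrows \emph{into} $k$ (the outgoing-arrow version covers the mirror situations, which the paper handles by passing to opposite algebras), but this is exactly the dual variant you already allow for.
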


\begin{proof}
First, the number of oriented $3$-cycles with full relations is invariant
under derived equivalence for gentle algebras (see \cite{Holm}), so the number
$r_2+s_2$ is an invariant.
From Proposition $B$ in \cite{Avella-Geiss}, we know that the number of arrows
is also invariant under derived equivalence, so the number $r_1+s_1$ is an invariant, too.
Later, we show in the proof of Theorem \ref{main_result} that the single parameters
$r_1, \ r_2, \ s_1$ and $s_2$ are invariants under derived equivalence.

\smallskip

Our strategy in this proof is to go through the proof of Lemma \ref{mutate_to_normalform}
and define a tilting complex for each mutation in the Steps $1$ and $2$. We can omit the
three other steps since these are just the same situations as in the first two steps.
We show that if we mutate at some vertex of the quiver $Q$ and obtain a quiver $Q^*$,
then the two corresponding cluster tilted algebras are derived equivalent.

\medskip

\begin{minipage}{0.96\linewidth}
\begin{minipage}{0.13\linewidth}
Step $1$
\end{minipage}%
\begin{minipage}[t]{0.87\linewidth}
Let $A$ be a cluster tilted algebra with corresponding quiver
\begin{center}
\includegraphics[scale=0.9]{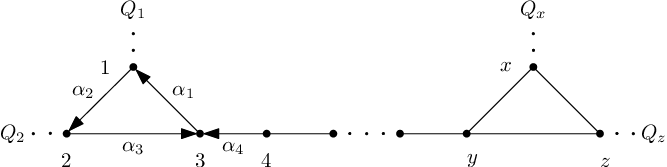}
\end{center}
\end{minipage}
\end{minipage}

\bigskip

\begin{minipage}{0.96\linewidth}
\begin{minipage}{0.13\linewidth}
\ 
\end{minipage}%
\begin{minipage}[t]{0.87\linewidth}
We can compute the Cartan matrix to be
{\scriptsize $\left(\begin{array}[c]{ccccc}1&1&0&0&\dots\\ 0&1&1&0&\dots\\
1&0&1&0&\dots\\ 1&0&1&1&\dots\\  \vdots&\vdots&\vdots&\vdots&\ddots\end{array}\right)$}.
\end{minipage}
\end{minipage}

\bigskip

\begin{minipage}{0.96\linewidth}
\begin{minipage}{0.13\linewidth}
\ 
\end{minipage}%
\begin{minipage}[t]{0.87\linewidth}
Since we deal with left modules and read paths from right to left, a non-zero
path from vertex $i$ to $j$ gives a homomorphism $P_j \rightarrow P_i$ by right
multiplication. Thus, two arrows $\alpha: i \rightarrow j$ and $\beta: j \rightarrow k$
give a path $\beta\alpha$ from $i$ to $k$ and a homomorphism $\alpha\beta :
P_k \rightarrow P_i$.

In the above situation, we have homomorphisms $P_3 \stackrel{\alpha_3}{\longrightarrow }
P_2$ and $P_3 \stackrel{\alpha_4}{\longrightarrow } P_4$.
\end{minipage}
\end{minipage}

\bigskip

\begin{minipage}{0.96\linewidth}
\begin{minipage}{0.13\linewidth}
\ 
\end{minipage}%
\begin{minipage}[c]{0.87\linewidth}
Let $T=\bigoplus_{i=1}^{n+1} T_i$ be the following bounded complex of projective
$A$-modules, where
$T_i : 0 \rightarrow P_i \rightarrow 0, \ i \in \{1,2,4,\dots,n+1\}$, are complexes
concentrated in degree zero and
$T_3 : 0 \rightarrow P_3 \stackrel{(\alpha_3, \alpha_4)}{\longrightarrow} P_2 \oplus P_4
\rightarrow 0$ is a complex concentrated in degrees $-1$ and $0$.

We leave it to the reader to verify that this is indeed a tilting complex.
\end{minipage}
\end{minipage}

\bigskip

\begin{minipage}{0.96\linewidth}
\begin{minipage}{0.13\linewidth}
\ 
\end{minipage}%
\begin{minipage}[c]{0.87\linewidth}
By Rickard's Theorem \ref{Rickard}, $E:=\mathrm{End}_{D^b(A)}(T)$
is derived equivalent to $A$.
Using the alternating sum formula of the Proposition \ref{Happel} of Happel
we can compute the Cartan matrix of $E$ to be
{\scriptsize $\left(\begin{array}[c]{ccccc}1&1&1&0&\dots\\ 0&1&0&0&\dots\\
0&1&1&1&\dots\\ 1&0&0&1&\dots\\  \vdots&\vdots&\vdots&\vdots&\ddots\end{array}\right)$}.
\end{minipage}
\end{minipage}

\bigskip

\begin{minipage}{0.96\linewidth}
\begin{minipage}{0.13\linewidth}
\ 
\end{minipage}%
\begin{minipage}[c]{0.87\linewidth}
We define homomorphisms in $E$ as follows
\begin{center}
\includegraphics[scale=0.9]{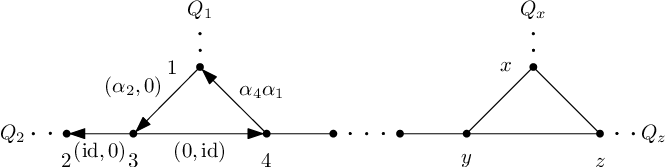}
\end{center}
\end{minipage}
\end{minipage}

\bigskip

\begin{minipage}{0.96\linewidth}
\begin{minipage}{0.13\linewidth}
\ 
\end{minipage}%
\begin{minipage}[c]{0.87\linewidth}
Now we have to check the relations, up to homotopy.

Clearly, the homomorphism $(\alpha_4\alpha_1\alpha_2,0)$ in the oriented $3$-cycle
containing the vertices $1,3$ and $4$ is zero since $\alpha_1\alpha_2$ was zero in $A$.
Furthermore, the composition of $(\alpha_2,0)$ and $(0,\mathrm{id})$ yields a
zero-relation. The last zero-relation in this oriented $3$-cycle is the
concatenation of $(0,\mathrm{id})$ and $\alpha_4\alpha_1$ since this homomorphism
is homotopic to zero:
\end{minipage}
\end{minipage}

\bigskip

\begin{minipage}{0.96\linewidth}
\begin{minipage}{0.13\linewidth}
\ 
\end{minipage}%
\begin{minipage}[c]{0.87\linewidth}
\begin{center}
\includegraphics[scale=0.9]{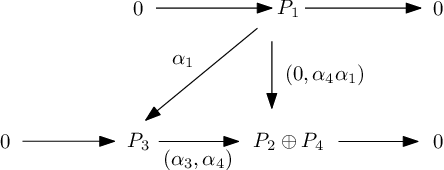}
\end{center}
\end{minipage}
\end{minipage}

\bigskip

\begin{minipage}{0.96\linewidth}
\begin{minipage}{0.13\linewidth}
\ 
\end{minipage}%
\begin{minipage}[c]{0.87\linewidth}
The relations in all the other oriented $3$-cycles of this quiver are
the same as in the quiver of $A$.

Thus, we have defined homomorphisms between the summands of $T$ corresponding
to the arrows of the quiver which we obtain after mutating at vertex $3$ in
the quiver of $A$. We have shown that they satisfy the defining relations
of this algebra and the Cartan matrices agree. Thus, $A$ is derived equivalent
to $E$ and $A^{\mathrm{op}}$ is derived equivalent to $E^{\mathrm{op}}$,
where the quiver of $E$ is the same as the quiver we obtain after mutating
at vertex $3$ in the quiver of $A$. Furthermore, the quivers of $A^{\mathrm{op}}$
and $E^{\mathrm{op}}$ are the quivers in the other case in Step $1$.
\end{minipage}
\end{minipage}

\bigskip

\begin{minipage}{0.96\linewidth}
\begin{minipage}{0.13\linewidth}
Step $2$
\end{minipage}%
\begin{minipage}[t]{0.87\linewidth}
Let $A$ be a cluster tilted algebra with corresponding quiver
\begin{center}
\includegraphics[scale=0.9]{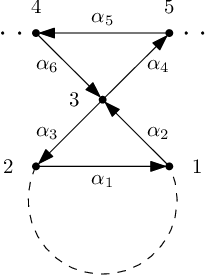}
\end{center}
\end{minipage}
\end{minipage}

\bigskip

\begin{minipage}{0.96\linewidth}
\begin{minipage}{0.13\linewidth}
\ 
\end{minipage}%
\begin{minipage}[c]{0.87\linewidth}
We define a tilting complex $T$ as follows:
Let $T=\bigoplus_{i=1}^{n+1} T_i$ be the following bounded complex of
projective $A$-modules,
where $T_i : 0 \rightarrow P_i \rightarrow 0, \ i \in \{1,2,4,\dots,n+1\}$,
are complexes concentrated in degree zero
and $T_3 : 0 \rightarrow P_3 \stackrel{(\alpha_2, \alpha_6)}{\longrightarrow}
P_1 \oplus P_4 \rightarrow 0$ is a complex concentrated in degrees $-1$ and $0$.
\end{minipage}
\end{minipage}

\bigskip

\begin{minipage}{0.96\linewidth}
\begin{minipage}{0.13\linewidth}
\ 
\end{minipage}%
\begin{minipage}[c]{0.87\linewidth}
By Rickard's theorem, $E:=\mathrm{End}_{D^b(A)}(T)$ is derived equivalent to $A$.
Using the alternating sum formula of the Proposition of Happel we can compute
the Cartan matrix of $E$ to be
{\scriptsize $\left(\begin{array}[c]{cccccc} 1&0&0&0&1&\dots\\ 1&1&1&0&0&\dots\\
1&0&1&1&0&\dots\\ 0&1&0&1&0&\dots\\ 0&0&1&1&1&\dots\\
\vdots&\vdots&\vdots&\vdots&\vdots&\ddots\end{array}\right)$}.
\end{minipage}
\end{minipage}

\begin{minipage}{0.96\linewidth}
\begin{minipage}{0.13\linewidth}
\ 
\end{minipage}%
\begin{minipage}[c]{0.87\linewidth}
(This deals with the case where not all the arrows between $2$ and $1$ along the
non-oriented cycle are oriented in the same direction. The case where they are can
be handled similarly.)
\end{minipage}
\end{minipage}

\bigskip

\begin{minipage}{0.96\linewidth}
\begin{minipage}{0.13\linewidth}
\ 
\end{minipage}%
\begin{minipage}[c]{0.87\linewidth}
We define homomorphisms in $E$ as follows
\begin{center}
\includegraphics[scale=0.9]{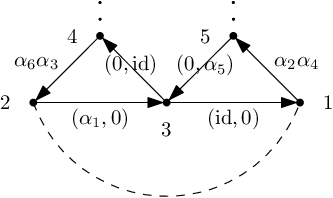}
\end{center}
\end{minipage}
\end{minipage}

\bigskip

\begin{minipage}{0.96\linewidth}
\begin{minipage}{0.13\linewidth}
\ 
\end{minipage}%
\begin{minipage}[c]{0.87\linewidth}
Thus, $A$ is derived equivalent to $E$ and $A^{\mathrm{op}}$ is derived
equivalent to $E^{\mathrm{op}}$, where the quiver of $E$ is the same as
the quiver we obtain after mutating at $3$.
\end{minipage}
\end{minipage}

\bigskip

In the Steps $3$ and $4$ of the proof of Lemma \ref{mutate_to_normalform} we mutate at a
vertex with three incident arrows as in Step $1$.
In Step $5$ we mutate at sinks, sources and at vertices with four incident arrows as in Step $2$.

Thus, we obtain a quiver of a derived equivalent cluster tilted algebra by all mutations
in the proof of Lemma \ref{mutate_to_normalform}.
Hence, every cluster tilted algebra $A=KQ/I$ of type $\tilde{A}_n$ is derived equivalent
to a cluster tilted algebra with a quiver in normal form which has the same parameters as $Q$.
\end{proof}

Our next aim is to prove the main result:

\begin{thm} \label{main_result}
Two cluster tilted algebras of type $\tAn$ are derived equivalent if and only if their
quivers have the same parameters $r_1,$ $r_2,$ $s_1$ and $s_2$ (up to changing the roles
of $r_i$ and $s_i$, $i\in \{1,2\}$).
\end{thm}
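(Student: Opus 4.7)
The plan is to establish the two implications separately, using Lemma \ref{derived_equivalences} for sufficiency and the Avella-Alaminos--Gei\ss{} derived invariant for necessity.

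\textbf{Sufficiency.} Suppose $A_1$ and $A_2$ are cluster tilted algebras of type $\tAn$ whose quivers have the same parameters $r_1,r_2,s_1,s_2$ up to the symmetry $r_i\leftrightarrow s_i$. By Lemma \ref{derived_equivalences}, each $A_i$ is derived equivalent to a cluster tilted algebra attached to the normal form quiver of Figure \ref{normal_form} with those parameters. Since the parameters $(r_1,r_2,s_1,s_2)$ and $(s_1,s_2,r_1,r_2)$ describe the same abstract quiver---the swap merely reflects the choice of planar embedding---the two normal forms coincide as quivers, so $A_1$ and $A_2$ are both derived equivalent to a common algebra and hence to each other.

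\textbf{Necessity.} For the converse we need derived invariants that separate the four parameters individually. As already noted in the proof of Lemma \ref{derived_equivalences}, the number $r_2+s_2$ of oriented $3$-cycles with full relations and the number $r_1+s_1$ of arrows outside any $3$-cycle are derived invariants for gentle algebras, via \cite{Holm} and \cite{Avella-Geiss} respectively. To recover the finer decomposition, we invoke the Avella-Alaminos--Gei\ss{} function $\phi_A:\mathbb{N}^2\to\mathbb{N}$, which is a complete combinatorial derived invariant of gentle algebras defined in \cite{Avella-Geiss} by an algorithm alternating maximal \emph{permitted} and \emph{forbidden} threads.

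The plan is to compute $\phi_A$ for a cluster tilted algebra $A$ whose quiver is in normal form with parameters $(r_1,r_2,s_1,s_2)$. In such a quiver the zero-relations sit inside the $r_2+s_2$ oriented $3$-cycles attached to the non-oriented cycle, and the non-oriented cycle splits into an anti-clockwise arc carrying $r_1$ free arrows and $r_2$ shared arrows of $3$-cycles, together with a clockwise arc carrying $s_1$ free arrows and $s_2$ shared arrows. Running the AG-algorithm, each closed trace either runs along the anti-clockwise arc, sweeping through the $r_1$ free arrows and being interrupted by the $r_2$ forbidden segments coming from the $3$-cycles, or runs along the clockwise arc analogously; each individual $3$-cycle additionally contributes a local trace recorded in $\phi_A(1,1)$. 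Collecting the outputs yields a formula for $\phi_A$ packaging the two pairs $(r_1,r_2)$ and $(s_1,s_2)$ as an unordered pair, from which both can be read off.

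\textbf{Main obstacle.} The principal combinatorial work is the explicit AG-computation on the normal form and the verification that $\phi_A$ uniquely recovers the unordered pair $\{(r_1,r_2),(s_1,s_2)\}$. Care is required in the degenerate cases where the non-oriented cycle is a double arrow, where one arc contains no free arrows ($r_1=0$ or $s_1=0$), or where one side carries no $3$-cycles ($r_2=0$ or $s_2=0$). Once this formula is secured, any derived equivalence $A_1\sim A_2$ forces $\phi_{A_1}=\phi_{A_2}$, hence equality of the parameter sets up to the $r_i\leftrightarrow s_i$ swap, which combined with sufficiency completes the theorem.
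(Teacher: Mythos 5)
Your proposal follows essentially the same route as the paper: sufficiency via Lemma \ref{derived_equivalences} (reduction to the normal form), and necessity by computing the Avella-Alaminos--Gei\ss{} invariant $\phi_A$ on the normal form and invoking its derived invariance (Lemma \ref{phi_invariant}). Two inaccuracies in your sketch should be corrected, though neither breaks the argument. First, the oriented $3$-cycles do not contribute a ``local trace'' valued at $\phi_A(1,1)$; in the AG algorithm they are handled by the separate step for directed cycles with full relations and each contributes the pair $(0,3)$, so the final answer is $\phi_A(r,r_1)=1$, $\phi_A(s,s_1)=1$ and $\phi_A(0,3)=r_2+s_2$ with $r=r_1+r_2$, $s=s_1+s_2$, from which the four parameters are recovered up to the swap. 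Second, $\phi$ is not a \emph{complete} derived invariant for gentle algebras in general; only its invariance under derived equivalence is needed (and available) here, and that is the only direction your argument actually uses. The ``main obstacle'' you defer --- the explicit thread computation on the normal form, including the degenerate cases --- is precisely the content of the paper's proof, so the outline is sound but the work remains to be done.
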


But first, we recall some background from \cite{Avella-Geiss}.
Let $A = KQ/I$ be a gentle algebra, where $Q=(Q_0,Q_1)$ is a connected quiver.
A {\em permitted path} of $A$ is a path $C=\alpha_l\dots\alpha_2\alpha_1$ which
contains no zero-relations.
A permitted path $C$ is called a {\em non-trivial permitted thread} if for all
$\beta \in Q_1$ neither $C \beta$ nor $\beta C$ is a permitted path.
Similarly a {\em forbidden path} of $A$ is a sequence $\Pi = \alpha_l\dots\alpha_2\alpha_1$
formed by pairwise different arrows in $Q$ with $\alpha_{i+1}\alpha_i \in I$ for all
$i \in \{1,2,\dots,l-1\}$.
A forbidden path $\Pi$ is called a {\em non-trivial forbidden thread} if for all
$\beta \in Q_1$ neither $\Pi \beta$ nor $\beta \Pi$ is a forbidden path.
Let $v \in Q_0$ such that $\# \{\alpha \in Q_1 : s(\alpha) = v\} \leq 1$,
$\# \{\alpha \in Q_1 : t(\alpha) = v\} \leq 1$
and if $\beta, \gamma \in Q_1$ are such that $s(\gamma) = v = t(\beta)$ then
$\gamma \beta \notin I$. Then we consider $e_v$ a {\em trivial permitted thread}
in $v$ and denote it by $h_v$. Let $\mH_A$ be the set of all permitted threads
of $A$, trivial and non-trivial. Similarly, for $v \in Q_0$ such that
$\# \{\alpha \in Q_1 : s(\alpha) = v\} \leq 1$, $\# \{\alpha \in Q_1 : t(\alpha) = v\} \leq 1$
and if $\beta, \gamma \in Q_1$ are such that $s(\gamma) = v = t(\beta)$ then
$\gamma \beta \in I$, we consider $e_v$ a {\em trivial forbidden thread} in $v$
and denote it by $p_v$. Note that certain paths can be permitted and forbidden
threads simultaneously.

Now, one can define two functions $\sigma, \varepsilon : Q_1 \rightarrow \{1,-1\}$ which
satisfy the following three conditions:
\begin{itemize}
\item[1)] If $\beta_1 \not= \beta_2$ are arrows with $s(\beta_1) = s(\beta_2)$,
then $\sigma(\beta_1) = -\sigma(\beta_2)$.
\item[2)] If $\gamma_1 \not= \gamma_2$ are arrows with $t(\gamma_1) = t(\gamma_2)$,
then $\varepsilon(\gamma_1) = -\varepsilon(\gamma_2)$.
\item[3)] If $\beta$ and $\gamma$ are arrows with $s(\gamma) = t(\beta)$ and
$\gamma \beta \notin I$, then $\sigma(\gamma) = -\varepsilon(\beta)$.
\end{itemize}
We can extend these functions to threads of $A$ as follows:
for a non-trivial thread $H=\alpha_l\dots\alpha_2\alpha_1$ of $A$ define
$\sigma(H):= \sigma(\alpha_1)$ and $\varepsilon(H):= \varepsilon(\alpha_l)$.
If there is a trivial permitted thread $h_v$ for some $v \in Q_0$, the connectivity
of $Q$ assures the existence of some $\gamma \in Q_1$ with $s(\gamma) = v$ or some
$\beta \in Q_1$ with $t(\beta) = v$.
In the first case, we define $\sigma(h_v) = -\varepsilon(h_v) := -\sigma(\gamma)$,
for the second case $\sigma(h_v) = -\varepsilon(h_v) := \varepsilon(\beta)$.
If there is a trivial forbidden thread $p_v$ for some $v \in Q_0$, we know that there
exists $\gamma \in Q_1$ with $s(\gamma) = v$ or $\beta \in Q_1$ with $t(\beta) = v$.
In the first case, we define $\sigma(p_v) = \varepsilon(h_v) := -\sigma(\gamma)$,
for the second case $\sigma(p_v) = \varepsilon(h_v) := -\varepsilon(\beta)$.

Now there is a combinatorial algorithm (stated in \cite{Avella-Geiss}) to produce
certain pairs of natural numbers, by using only the quiver with relations which
defines a gentle algebra.
In the algorithm we are going forward through permitted threads and backwards through
forbidden threads in such a way that each arrow and its inverse is used exactly once.

\begin{alg} \label{algorithm_avella}
The algorithm is as follows:
\begin{itemize}
\item[1)]
\begin{itemize}
\item[a)] Begin with a permitted thread $H_0$ of $A$.
\item[b)] If $H_i$ is defined, consider $\Pi_i$ the forbidden thread which
ends in $t(H_i)$ and such that $\varepsilon(H_i) = -\varepsilon(\Pi_i)$.
\item[c)] Let $H_{i+1}$ be the permitted thread which starts in $s(\Pi_i)$
and such that $\sigma(H_{i+1}) = -\sigma(\Pi_i)$.

The process stops when $H_k = H_0$ for some natural number $k$.
Let $m = \sum\limits_{1 \leq i \leq k} l(\Pi_{i-1})$,
where $l()$ is the length of a path, i.e., the number of arrows of the path.
We obtain the pair $(k,m)$.
\end{itemize}
\item[2)] Repeat the first step of the algorithm until all permitted threads of
$A$ have been considered.
\item[3)] If there are oriented cycles in which each pair of consecutive arrows
form a relation, we add a pair $(0,m)$ for each of those cycles, where $m$ is the
length of the cycle.
\item[4)] Define $\phi_A : \N^2 \rightarrow \N$, where $\phi_A(k,m)$ is the number
of times the pair $(k,m)$ arises in the algorithm.
\end{itemize}
\end{alg}

This function $\phi$ is invariant under derived equivalence:

\begin{lem}
[Avella-Alaminos and Gei\ss \ \cite{Avella-Geiss}] \label{phi_invariant}
Let $A$ and $B$ be gentle algebras. If $A$ and $B$ are derived equivalent,
then $\phi_A = \phi_B$.
\end{lem}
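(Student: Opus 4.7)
The plan is to verify invariance of $\phi$ along a generating set of derived equivalences between gentle algebras. By Rickard's theorem (Theorem~\ref{Rickard}), a derived equivalence $D^b(A) \simeq D^b(B)$ is induced by a tilting complex $T$ over $A$ with $B \cong \End_{D^b(A)}(T)$, so the task reduces to understanding how the algorithm of Definition~\ref{algorithm_avella} interacts with such tilts.

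First I would invoke the theory of tilting mutation (Happel--Unger): any two tilting complexes can be connected by a sequence of single-summand mutations, where one summand $T_i$ is replaced by the cone (or cocone) of a minimal $\add(T/T_i)$-approximation. Restricting to those mutations whose source and target are both gentle yields a manageable catalogue of \emph{elementary moves} on the quiver-with-relations $(Q,I)$. Examples of these already appear in Lemma~\ref{derived_equivalences}, where two-term tilting complexes built from a projective mapping to a direct sum of two projectives realise precisely the quiver mutations used in the normal-form reduction.

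The bulk of the proof is then a combinatorial verification, one elementary move at a time: describe explicitly how the set $\mH_A$ of permitted threads and the set of forbidden threads are modified, how the sign functions $\sigma, \varepsilon$ are re-assigned, and how the orbits traced out by the algorithm (alternating forward along permitted threads and backward along forbidden ones) match up before and after the move. In each case one must check that both coordinates of every produced pair $(n,m)$ are preserved, tracking in particular how the lengths of forbidden threads contribute to $m$ and how directed cycles whose consecutive arrows all compose to zero (the third clause in the algorithm) are created or destroyed.

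The main obstacle is the reduction step: showing that any two derived equivalent gentle algebras can actually be connected by such elementary moves \emph{within} the gentle class, so that the case analysis above is enough. A clean way to handle this is via the surface/ribbon-graph model for gentle algebras, under which derived equivalence corresponds to an equivalence of marked surfaces and the elementary moves correspond to flips of the associated dissection; connectedness of dissections of a fixed marked surface then supplies the needed combinatorial connectedness. Alternatively, one may try to identify $\phi_A$ intrinsically as a derived invariant (for instance via Hochschild cohomology, or via numerical data of the AR-quiver of $D^b(A)$), from which invariance would follow tautologically — but producing such an identification is itself substantial work, and for the purposes of applying Lemma~\ref{phi_invariant} in our setting the case-by-case route already suffices.
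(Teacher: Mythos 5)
First, a point of order: the paper does not prove this statement at all --- it is quoted verbatim from Avella-Alaminos and Gei\ss\ \cite{Avella-Geiss} and used as a black box, so there is no internal proof to compare against. Judged on its own merits, your sketch has a genuine gap, and it is exactly the one you flag yourself: the reduction step. Your main route requires that any derived equivalence between gentle algebras factor through a chain of elementary, two-term, gentle-to-gentle tilts. No such connectedness theorem exists. The Happel--Unger mutation theory you invoke concerns the tilting quiver of modules over a fixed algebra (where connectedness is already delicate), not arbitrary tilting complexes between arbitrary gentle algebras; and restricting to moves that stay inside the gentle class only makes the connectedness claim harder, not easier. The surface/ribbon-graph fallback is circular in this context: the identification of derived equivalence classes of gentle algebras with marked surfaces up to homeomorphism is a later development that itself relies on the Avella-Alaminos--Gei\ss\ invariant (or comparable derived invariants) to separate classes. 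So the case-by-case verification over elementary moves, however carefully executed, would only prove invariance of $\phi$ under those particular moves --- which is essentially what Lemma \ref{derived_equivalences} of this paper already does for its specific tilts --- and not the general statement.

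The route that actually works is the one you mention only as an aside and set down as ``substantial work'': identifying $\phi_A$ intrinsically as a derived invariant. This is how \cite{Avella-Geiss} proceeds: the pairs $(n,m)$ produced by the algorithm are matched with numerical data attached to the Auslander--Reiten components of $D^b(A)$ (equivalently, of the stable module category of the repetitive algebra via Happel's embedding), which is manifestly preserved by any triangle equivalence. Invariance then follows with no generation or connectedness argument at all. If you want to prove the lemma rather than cite it, that identification is the content you would need to supply; the elementary-move catalogue is neither necessary nor sufficient.
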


\begin{figure}[t!]
\centering
\includegraphics[scale=0.75]{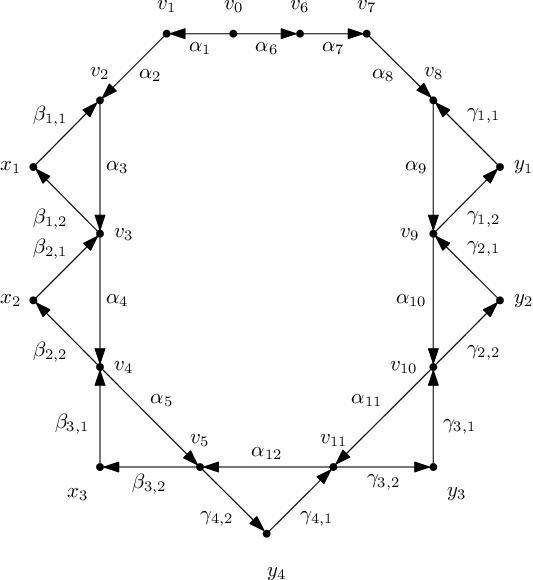}
\caption{Quiver for Example~\ref{ex:algorithm}.}
\label{fig:example_algorithm}
\end{figure}

\begin{eg}\label{ex:algorithm}
Figure~\ref{fig:example_algorithm} shows the quiver of a cluster tilted algebra $A$
of type $\tilde{A}_{18}$, where $r_1=2$, $r_2=3$, $s_1=3$ and $s_2=4$ and thus, $r:=r_1+r_2=5$
and $s:=s_1+s_2=7$.



Now, we define the functions $\sigma$ and $\varepsilon$ for all arrows in $Q$:
\begin{longtable}[c]{lcrlcrlccl}
$\sigma(\alpha_i)$ &$=$& $1$, & $\varepsilon(\alpha_i)$ &$=$& $-1$ & for all
& $i$ &$=$& $1, \dots, 5$\\
$\sigma(\alpha_i)$ &$=$& $-1$, & $\varepsilon(\alpha_i)$ &$=$& $1$ & for all
& $i$ &$=$& $6, \dots, 12$\\
$\sigma(\beta_{j,1})$ &$=$& $1$, & $\varepsilon(\beta_{j,1})$ &$=$& $1$ & for all
& $j$ &$=$& $1, \dots, 3$\\
$\sigma(\beta_{j,2})$ &$=$& $-1$, & $\varepsilon(\beta_{j,2})$ &$=$& $1$ & for all
& $j$ &$=$& $1, \dots, 3$\\
$\sigma(\gamma_{l,1})$ &$=$& $-1$, & $\varepsilon(\gamma_{l,1})$ &$=$& $-1$ & for all
& $l$ &$=$& $1, \dots, 4$\\
$\sigma(\gamma_{l,2})$ &$=$& $1$, & $\varepsilon(\gamma_{l,2})$ &$=$& $-1$ & for all
& $l$ &$=$& $1, \dots, 4$\\
\end{longtable}
Then $\mH_A$ is formed by $h_{v_1},$ $h_{v_6},$ $h_{v_7},$
$\gamma_{4,2}\alpha_5\alpha_4\alpha_3\alpha_2\alpha_1,$ $\beta_{3,2}\alpha_{12}\alpha_{11}\alpha_{10}\alpha_9\alpha_8\alpha_7\alpha_6,$
$\beta_{1,1},$ $\beta_{1,2}\beta_{2,1},$ $\beta_{2,2}\beta_{3,1},$ $\gamma_{1,1},$
$\gamma_{1,2}\gamma_{2,1},$ $\gamma_{2,2}\gamma_{3,1}$ and $\gamma_{3,2}\gamma_{4,1}$.
The forbidden threads of $A$ are $p_{x_1},$ $p_{x_2},$ $p_{x_3},$ $p_{y_1},$
$p_{y_2},$ $p_{y_3},$ $p_{y_4},$ $\alpha_1,$ $\alpha_2,$ $\alpha_6,$ $\alpha_7,$
$\alpha_8$ and all the oriented $3$-cycles.

\medskip

Moreover, we can write
\begin{center}
$\begin{array}{lclclclcr}
\sigma(h_{v_1}) &=& -\varepsilon(h_{v_1}) &=& -\sigma(\alpha_2)
&=& \varepsilon(\alpha_1) &=& -1\\
\sigma(h_{v_6}) &=& -\varepsilon(h_{v_6}) &=& -\sigma(\alpha_7)
&=& \varepsilon(\alpha_6) &=& 1\\
\sigma(h_{v_7}) &=& -\varepsilon(h_{v_7}) &=& -\sigma(\alpha_8)
&=& \varepsilon(\alpha_7) &=& 1
\end{array}$
\end{center}
for the trivial permitted threads and
\begin{center}
$\begin{array}{lclclclcrlccl}
\sigma(p_{x_i}) &=& \varepsilon(p_{x_i}) &=& -\sigma(\beta_{i,1})
&=& -\varepsilon(\beta_{i,2}) &=& -1 & \textrm{ for all } & i &=& 1,2,3\\
\sigma(p_{y_i}) &=& \varepsilon(p_{y_i}) &=& -\sigma(\gamma_{i,1})
&=& -\varepsilon(\gamma_{i,2}) &=& 1 & \textrm{ for all } & i &=& 1,2,3,4
\end{array}$
\end{center}
for the trivial forbidden threads.

\medskip

Let $H_0=h_{v_1}$ and $\Pi_0 = \alpha_1$ with $\varepsilon(h_{v_1})=
-\varepsilon(\alpha_1) =1$.
Then $H_1$ is the permitted thread which starts in $s(\Pi_0) = v_0$
and $\sigma(H_1) = \sigma(\alpha_6) = -\sigma(\Pi_0) =-1$, that is
$\beta_{3,2}\alpha_{12}\alpha_{11}\alpha_{10}\alpha_9\alpha_8\alpha_7\alpha_6$.
Now $\Pi_1 = p_{x_3}$ since it is the forbidden thread which ends in $x_3$
and $\varepsilon(\Pi_1) = -\varepsilon(H_1) = -\varepsilon(\beta_{3,2}) =-1$.
Then $H_2 = \beta_{2,2}\beta_{3,1}$ is the permitted thread starting in $x_3$ and
$\sigma(\Pi_1) = -\sigma(H_2) = -\sigma(\beta_{3,1}) =-1$.
Thus, $\Pi_2 = p_{x_2}$ with $\varepsilon(H_2) = \varepsilon(\beta_{2,2}) =
-\varepsilon(\Pi_2) =1$.

In the same way we can define the missing threads and we get:
\begin{longtable}{lcllcl}
$H_0$ & = & $h_{v_1}$ & $\Pi_0^{-1}$ &=& $\alpha_1^{-1}$\\
$H_1$ & = & $\beta_{3,2}\alpha_{12}\alpha_{11}\alpha_{10}\alpha_9\alpha_8\alpha_7\alpha_6$
& $\Pi_1^{-1}$ &=& $p_{x_3}$\\
$H_2$ & = & $\beta_{2,2}\beta_{3,1}$ & $\Pi_2^{-1}$ &=& $p_{x_2}$\\
$H_3$ & = & $\beta_{1,2}\beta_{2,1}$ & $\Pi_3^{-1}$ &=& $p_{x_1}$\\
$H_4$ & = & $\beta_{1,1}$ & $\Pi_4^{-1}$ &=& $\alpha_2^{-1}$\\
$H_5$ & = & $H_0$
\end{longtable}
\begin{center}
$\rightarrow (5,2)$
\end{center}
where $\alpha_1^{-1}$ is defined by $s(\alpha_1^{-1}) := t(\alpha_1)$, $t(\alpha_1^{-1})
:= s(\alpha_1)$ and $(\alpha_1^{-1})^{-1} = \alpha_1$.

\newpage

If we continue with the algorithm we obtain the second pair $(7,3) = (s,s_1)$ in the
following way:
\begin{longtable}{lcllcl}
$H_0$ & = & $h_{v_6}$ & $\Pi_0^{-1}$ &=& $\alpha_6^{-1}$\\
$H_1$ & = & $\gamma_{4,2}\alpha_5\alpha_4\alpha_3\alpha_2\alpha_1$ & $\Pi_1^{-1}$ &=& $p_{y_4}$\\
$H_2$ & = & $\gamma_{3,2}\gamma_{4,1}$ & $\Pi_2^{-1}$ &=& $p_{y_3}$\\
$H_3$ & = & $\gamma_{2,2}\gamma_{3,1}$ & $\Pi_3^{-1}$ &=& $p_{y_2}$\\
$H_4$ & = & $\gamma_{1,2}\gamma_{2,1}$ & $\Pi_4^{-1}$ &=& $p_{y_1}$\\
$H_5$ & = & $\gamma_{1,1}$ & $\Pi_5^{-1}$ &=& $\alpha_8^{-1}$\\
$H_6$ & = & $h_{v_7}$ & $\Pi_6^{-1}$ &=& $\alpha_7^{-1}$\\
$H_7$ & = & $H_0$
\end{longtable}
\begin{center}
$\rightarrow (7,3)$
\end{center}
Finally, we have to add seven pairs $(0,3)$ for the seven oriented $3$-cycles.
Thus, we get $\phi_A(5,2) = 1, \ \phi_A(7,3) =1$ and $\phi_A(0,3) =7$.
\end{eg}

Now we can extend this example to general quivers of cluster tilted algebras
of type $\tilde{A}_n$ in normal form.

\begin{proof}[Proof of Theorem \ref{main_result}]
We know from Lemma \ref{derived_equivalences} that every cluster tilted algebra
$A=KQ/I$ of type $\tAn$ with parameters
$r_1, \ r_2, \ s_1$ and $s_2$ is derived equivalent to a cluster tilted algebra
with a quiver in normal form, as shown in Figure~\ref{normal_form},
where $r_1$ is the number of arrows in the anti-clockwise direction which do not
share any arrow with an oriented $3$-cycle and
$s_1$ is the number of arrows in the clockwise direction which do not share any
arrow with an oriented $3$-cycle.
Moreover, $r_2$ is the number of oriented $3$-cycles which share one arrow $\alpha$
with the non-oriented cycle and $\alpha$ is oriented in the anti-clockwise direction and
$s_2$ is the number of oriented $3$-cycles which share one arrow $\beta$ with the
non-oriented cycle and $\beta$ is oriented in the clockwise direction
(see Definition \ref{parameters}).
Thus, $r:=r_1 + r_2$ is the number of arrows of the non-oriented cycle
in the anti-clockwise direction and $s:=s_1 + s_2$ is the number of arrows of
the non-oriented cycle in the clockwise direction.

\smallskip

We consider the quiver $Q$ in normal form with notation as given in
Figure~\ref{normal_form_avella} and
define the functions $\sigma$ and $\varepsilon$ for all arrows in $Q$:
\begin{center}
$\begin{array}[c]{lcrlcrlccl}
\sigma(\alpha_i) &=& 1, & \varepsilon(\alpha_i) &=& -1 & \textrm{ for all }
& i &=& 1, \dots, r\\
\sigma(\alpha_i) &=& -1, & \varepsilon(\alpha_i) &=& 1 & \textrm{ for all }
& i &=& r+1, \dots, r+s\\
\sigma(\beta_{j,1}) &=& 1, & \varepsilon(\beta_{j,1}) &=& 1 & \textrm{ for all }
& j &=& 1, \dots, r_2\\
\sigma(\beta_{j,2}) &=& -1, & \varepsilon(\beta_{j,2}) &=& 1 & \textrm{ for all }
& j &=& 1, \dots, r_2\\
\sigma(\gamma_{l,1}) &=& -1, & \varepsilon(\gamma_{l,1}) &=& -1 & \textrm{ for all }
& l &=& 1, \dots, s_2\\
\sigma(\gamma_{l,2}) &=& 1, & \varepsilon(\gamma_{l,2}) &=& -1 & \textrm{ for all }
& l &=& 1, \dots, s_2\\
\end{array}$
\end{center}
Here $\mH_A$ is formed by $h_{v_1}, \dots, h_{v_{r_1-1}}$,
$h_{v_{r+1}},\dots, h_{v_{r+s_1-1}},$
$\beta_{r_2,2}\alpha_{r+s}\alpha_{r+s-1}\dots\alpha_{r+2}\alpha_{r+1},$

\noindent
$\gamma_{s_2,2}\alpha_r\alpha_{r-1}\dots\alpha_2\alpha_1$, $\beta_{1,1}$,
$\beta_{1,2}\beta_{2,1}, \dots, \beta_{r_2-1,2}\beta_{r_2,1},$ $\gamma_{1,1}$,
$\gamma_{1,2}\gamma_{2,1},\dots, \gamma_{s_2-1,2}\gamma_{s_2,1}$.

The forbidden threads of $A$ are $p_{x_1},\dots, p_{x_{r_2}},$ $p_{y_1},\dots, p_{y_{s_2}},$
$\alpha_1, \dots, \alpha_{r_1},$ $\alpha_{r+1},\dots, \alpha_{r+s_1}$ and all the
oriented $3$-cycles.

\medskip

Moreover, we can write
\begin{longtable}[c]{lclclclcr}
$\sigma(h_{v_1})$ &$=$& $-\varepsilon(h_{v_1})$ &$=$& $-\sigma(\alpha_2)$
&$=$& $\varepsilon(\alpha_1)$ &$=$& $-1$\\
& $\vdots$\\
$\sigma(h_{v_{r_1-1}})$ &$=$& $-\varepsilon(h_{v_{r_1-1}})$ &$=$& $-\sigma(\alpha_{r_1})$
&$=$& $\varepsilon(\alpha_{r_1-1})$ &$=$& $-1$\\
$\sigma(h_{v_{r+1}})$ &$=$& $-\varepsilon(h_{v_{r+1}})$ &$=$& $-\sigma(\alpha_{r+2})$
&$=$& $\varepsilon(\alpha_{r+1})$ &=& $1$\\
& $\vdots$\\
$\sigma(h_{v_{r+s_1-1}})$ &$=$& $-\varepsilon(h_{v_{r+s_1-1}})$ &$=$& $-\sigma(\alpha_{r+s_1})$
&$=$& $\varepsilon(\alpha_{r+s_1-1})$ &$=$& $1$
\end{longtable}
for the trivial permitted threads and
\begin{center}
$\begin{array}[c]{lclclclcrlccl}
\sigma(p_{x_i}) &=& \varepsilon(p_{x_i}) &=& -\sigma(\beta_{i,1})
&=& -\varepsilon(\beta_{i,2}) &=& -1 & \textrm{ for all } & i &=& 1, \dots, r_2\\
\sigma(p_{y_i}) &=& \varepsilon(p_{y_i}) &=& -\sigma(\gamma_{i,1})
&=& -\varepsilon(\gamma_{i,2}) &=& 1 & \textrm{ for all } & i &=& 1, \dots, s_2
\end{array}$
\end{center}
for the trivial forbidden threads.

\medskip

Thus, we can apply the Algorithm~\ref{algorithm_avella} as follows:
\begin{longtable}[c]{lcllcl}
$H_0$ & = & $h_{v_1}$ & $\Pi_0^{-1}$ &=& $\alpha_1^{-1}$\\
$H_1$ & = & $\beta_{r_2,2}\alpha_{r+s}\alpha_{r+s-1}\dots\alpha_{r+2}\alpha_{r+1}$
& $\Pi_1^{-1}$ &=& $p_{x_{r_2}}$\\
$H_2$ & = & $\beta_{r_2-1,2}\beta_{r_2,1}$ & $\Pi_2^{-1}$ &=& $p_{x_{r_2-1}}$\\
\vdots & & & \vdots\\
$H_{r_2}$ & = & $\beta_{1,2}\beta_{2,1}$ & $\Pi_{r_2}^{-1}$ &=& $p_{x_1}$\\
$H_{r_2+1}$ & = & $\beta_{1,1}$ & $\Pi_{r_2+1}^{-1}$ &=& $\alpha_{r_1}^{-1}$\\
$H_{r_2+2}$ & = & $h_{v_{r_1-1}}$ & $\Pi_{r_2+2}^{-1}$ &=& $\alpha_{r_1-1}^{-1}$\\
\vdots & & & \vdots\\
$H_{r-1}$ & = & $h_{v_2}$ & $\Pi_{r-1}^{-1}$ &=& $\alpha_2^{-1}$\\
$H_r$ & = & $H_0$
\end{longtable}

\begin{center}
$\begin{array}[c]{lcl}
m & = & l(\Pi_0) + l(\Pi_{r_2+1}) + l(\Pi_{r_2+2}) + \dots + l(\Pi_{r-1})\\
 & = & 1 + \underbrace{1+ 1 + \dots + 1}_{((r-1)-r_2)-\textrm{times}}\\
 & = & 1+(r-1)-r_2\\
 & = & r-r_2\\
 & = & r_1
\end{array}$

$\rightarrow (r,r_1)$
\end{center}
If we continue with the algorithm we obtain the second pair $(s,s_1)$
in the following way:
\begin{longtable}[c]{lcllcl}
$H_0$ & = & $h_{v_{r+1}}$ & $\Pi_0^{-1}$ &=& $\alpha_{r+1}^{-1}$\\
$H_1$ & = & $\gamma_{s_2,2}\alpha_r\alpha_{r-1}\dots\alpha_2\alpha_1 $ &
$\Pi_1^{-1}$ &=& $p_{y_{s_2}}$\\
$H_2$ & = & $\gamma_{s_2-1,2}\gamma_{s_2,1}$ & $\Pi_2^{-1}$ &=& $p_{y_{s_2-1}}$\\
\vdots & & & \vdots\\
$H_{s_2}$ & = & $\gamma_{1,2}\gamma_{2,1}$ & $\Pi_{s_2}^{-1}$ &=& $p_{y_1}$\\
$H_{s_2+1}$ & = & $\gamma_{1,1}$ & $\Pi_{s_2+1}^{-1}$ &=& $\alpha_{r+s_1}^{-1}$\\
$H_{s_2+2}$ & = & $h_{v_{r+s_1-1}}$ & $\Pi_{s_2+2}^{-1}$ &=& $\alpha_{r+s_1-1}^{-1}$\\
\vdots & & & \vdots\\
$H_{s-1}$ & = & $h_{v_{r+2}}$ & $\Pi_{s-1}^{-1}$ &=& $\alpha_{r+2}^{-1}$\\
$H_s$ & = & $H_0$
\end{longtable}

\begin{center}
$\rightarrow (s,s_1)$
\end{center}
Finally, we have to add $r_2+s_2$ pairs $(0,3)$ for the oriented $3$-cycles.
Thus, we have $\phi_A(r,r_1) = 1,$ $\phi_A(s,s_1) =1$ and $\phi_A(0,3) = r_2+s_2$,
where $r=r_1+r_2$ and $s=s_1+s_2$.

\medskip

Now, let $A$ and $B$ be two cluster tilted algebras of type $\tAn$ with parameters
$r_1, \ r_2, \ s_1, \ s_2$, respectively
$\tilde{r}_1, \ \tilde{r}_2, \ \tilde{s}_1, \ \tilde{s}_2$.
From above we can conclude that $\phi_A=\phi_B$ if and only if $r_1 = \tilde{r}_1,$
$r_2 = \tilde{r}_2,$ $s_1 = \tilde{s}_1$ and $s_2 = \tilde{s}_2$ or
$r_1 = \tilde{s}_1,$ $r_2 = \tilde{s}_2,$ $s_1 = \tilde{r}_1$ and $s_2 = \tilde{r}_2$
(which ends up being the same quiver).

Hence, if $A$ is derived equivalent to $B$, we know from Theorem \ref{phi_invariant}
that $\phi_A=\phi_B$ and thus, that the parameters are the same.
Otherwise, if $A$ and $B$ have the same parameters, they are both derived equivalent
to the same cluster tilted algebra with a quiver in normal form.
\end{proof}

\begin{figure}
\centering
\includegraphics[scale=0.75]{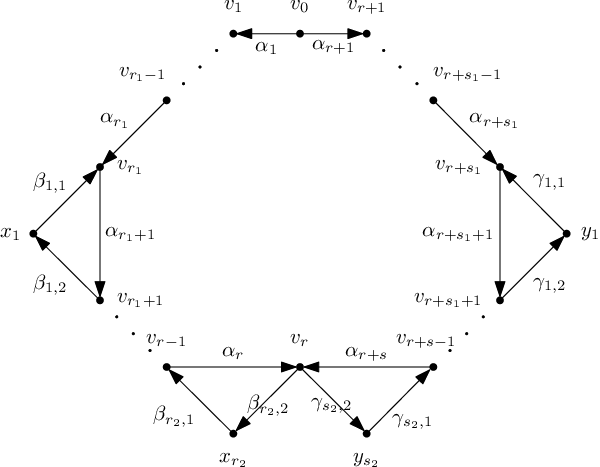}
\caption{A quiver in normal form.}
\label{normal_form_avella}
\end{figure}


\end{document}